\newtheorem{theorem}{Theorem}[section]
\newtheorem{lemma}[theorem]{Lemma}
\newtheorem{corollary}[theorem]{Corollary}
\newtheorem{question}[theorem]{Question}
\newtheorem{remark}[theorem]{Remark}
\theoremstyle{definition}
\newtheorem{proposition}[theorem]{Proposition}
\begin{document}

\title[The quasi-metrizability of hyperspaces]
{The quasi-metrizability of hyperspaces}

\author{Chuan Liu}
\address{(Chuan Liu)Department of Mathematics,
Ohio University Zanesville Campus, Zanesville, OH 43701, USA}
\email{liuc1@ohio.edu}

\author{Fucai Lin}
  \address{(Fucai Lin): School of mathematics and statistics,
  Minnan Normal University, Zhangzhou 363000, P. R. China}
  \email{linfucai@mnnu.edu.cn; linfucai2008@aliyun.com}

\thanks{The second author is supported by the Key Program of the Natural Science Foundation of Fujian Province (No: 2020J02043) and the NSFC (No. 11571158).}

\keywords{quasi-metrizable space; hyperspace; metrizable space; $\gamma$-space; hemicompact}%insert keywords
\subjclass[2010]{primary 54B20; secondary 54E35, 54E45}%insert subject class

%\date{\today}
\begin{abstract}
For a space $X$, let $(CL(X), \tau_V)$, $(CL(X), \tau_{locfin})$ and $(CL(X), \tau_F)$ be the set $CL(X)$ of all nonempty closed subsets of $X$ which are endowed with Vietoris topology, locally finite topology and Fell topology respectively. We prove that $(CL(X), \tau_V)$ is quasi-metrizable if and only if $X$ is a separable metrizable space and the set of all non-isolated points of $X$ is compact, $(CL(X), \tau_{locfin})$ is quasi-metrizable or symmetrizable if and only if $X$ is metrizable and the set of all non-isolated points of $X$ is compact, and $(CL(X), \tau_F)$ is quasi-metrizable if and only if $X$ is hemicompact and metrizable. As an application, we give a negative answer to a Conjecture in \cite{LL2022}.
\end{abstract}

\maketitle

\section{Introduction and Preliminaries}
A function $d: X\times X\rightarrow \mathbb{R}^{+}$ is called a {\it metric} on the set $X$ if for each $x, y, z\in X$,

\smallskip
(i)  $d(x, y)=0$ iff $x=y$;

\smallskip
(ii) $d(x, y)=d(y, x)$;

\smallskip
(iii) $d(x, z)\leq d(x, y)+d(y, z)$.\\

The function $d$ is called a symmetric (or quasi-metric) if $d$ satisfies $(i'), (ii)$  $((i), (iii))$ resctively, where $(i'): d(x, y)=0$ implies $x=y$. A topological space $(X, \mathscr{F})$ is said to be {\it quasi-metrizable} if there exists a quasi-metric on $X$ compatible with $\mathscr{F}$ (i.e., the family of all $\varepsilon$-neighborhoods forms a base for the topology). A space $X$ is said to be symmetrizable if there is symmetric $d$ on $X$ satisfying the following condition: $U\subset X$ is open if and only if for each $x\in U$, there exists $\epsilon >0$ with $B(x, \epsilon)\subset U$, where $B(x,\epsilon)=\{y\in X: d(x, y)<\epsilon\}$.

Quasi-metric spaces are roughly speaking like metric spaces without symmetry. There are many applications of quasi-metric to the theory of topology, mathematics analysis, probability and theoretical computer science which can be found e.g.
in \cite{B1997, Br2003, C2012, H1999, RT2011}. In \cite{LL2022}, we give the following Conjecture 1:

\smallskip
{\bf Conjecture 1 \cite{LL2023}:} Let $X$ be a space. Then  $(CL(X), \tau_V)$ is quasi-metrizable if and only if $X=C\oplus D$, where $X$ is a compact metric space and $D$ is a countable discrete space. 

\smallskip
In this paper, we give a negative answer to Conjecture 1 above, see Theorem ~\ref{t1}. Moreover, in \cite{LL2022} the authors proved that $(CL(X), \tau_V)$ is symmetrizable if and only if $X$ is compact metrizable. However, this is not true for quasi-metrizable.
In this paper, we mainly give some characterizations of the quasi-metrizability of hyperspaces. Given a topological space $X$, we define its {\it hyperspace} as the
following sets:

\smallskip
$CL(X)=\{A\subset X: A \ \mbox{is nonempty closed in}\ X\}$,

\smallskip
$\mathcal{K}(X)=\{K\subset X: K\ \mbox{is nonempty compact}\},$

\smallskip
$\mathcal{F}_n(X)=\{A\subset X: 1\leq|A|\leq n\}, n\in\mathbb{N}$ and

\smallskip
$\mathcal{F}(X)=\{A\subset X: A\neq\emptyset\ \mbox{and}\ A\ \mbox{is finite}\}.$

\smallskip
Obviously, we have $\mathcal{F}_n(X)\subset \mathcal{F}(X)\subset \mathcal{K}(X)\subset CL(X)$ and
$\mathcal{F}(X)={\bigcup_{n=1}^{\infty}}\mathcal{F}_n(X)$.

If $\mathcal{U}$ is a nonemptyset family of subsets of a space $X$, let
$$\mathcal{U}^-=\{A\in CL(X): A\cap U\neq\emptyset, U\in
\mathcal{U}\}$$ and $$\mathcal{U}^+=\{A\in CL(X): A\subset
\bigcup\mathcal{U}\};$$in particular, if $\mathcal{U}=\{U\}$, then we denote $\mathcal{U}^-$ and $\mathcal{U}^+$ by $U^-$ and $U^+$ respectively.

For any open subsets $U_1, \ldots, U_k$ of space $X$, let $$\langle
U_1, ..., U_k\rangle =\{H\in CL(X): H\subset {\bigcup}_{i=1}^k U_i\
\mbox{and}\ H\cap U_j\neq \emptyset, 1\leq j\leq k\}.$$ We endow
$CL(X)$ with {\it Vietoris topology} defined as the topology
generated by $$\{\langle U_1, \ldots, U_k\rangle: U_1, \ldots, U_k\
\mbox{are open subsets of}\ X, k\in \mathbb{N}\}.$$  We endow
$CL(X)$ with {\it Fell topology} defined as the topology generated
by the following two families as a subbase:
$$\{U^{-}: U\ \mbox{is any non-empty open in}\ X\}$$ and $$\{(K^{c})^{+}: K\ \mbox{is a compact subset of}\ X, K\neq X\}.$$
The {\it locally finite topology} $\tau_{loc fin}$
on $CL(X)$ has a subbase consisting of all subsets of the forms $V^+$
and $\mathcal{U}^-$, where $V$ ranges over all open subsets of $X$
and $\mathcal{U}$ range over all the locally finite families of open
subsets of $X$. We denote the hyperspace $CL(X)$ with the Vietoris topology, the Fell
topology and the locally finite topology by $(CL(X), \tau_{V})$, $(CL(X), \tau_{F})$ and $(CL(X), \tau_{locfin})$
respectively. For any $T_2$-space $X$, it is well known that $F_1(X)$ is
homeomorphic to $X$ in hyperspaces with the Vietoris topology, the
Fell topology and the locally finite topology respectively, so we consider all hyperspaces have a closed copy of
$X$.

Let $I(X)=\{x\in X: x \ \mbox{is an isolate point of $X$}\}$,
$NI(X)=X\setminus I(X)$; then $X=I(X)\cup NI(X)$. Throughout this paper, all topological spaces are assumed to be
$T_{2}$, unless otherwise is explicitly stated. Let $\mathbb{N}$ be the set of all positive integers and $\omega$ the first infinite ordinal. For undefined notations and
terminologies, the reader may refer to \cite{E1989},  \cite{G1984}
and \cite{M1951}.

\section{main results}
In this section, we shall give some characterizations of a space $X$ such that $(CL(X), \tau_{V})$, $(CL(X), \tau_{F})$ and $(CL(X), \tau_{locfin})$ are quasi-metric respectively. First, we give some lemmas and definitions.

\begin{lemma}\label{l1}\cite[Lemma 2.3]{LL2023}
Let $K$ be a compact subset of a topological space $X$ and $\{U_i:
i\leq k\}$ be an open cover of $K$. Then, for each $i\leq k$, there
exists a compact subset $K_i$ of $X$ such that $K_i\subset U_i$,
$K=\bigcup_{i\leq k}K_i$ and $K_i\neq \emptyset$ whenever $U_i\cap
K\neq \emptyset$.
\end{lemma}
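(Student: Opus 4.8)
The plan is to forget about $X$ for most of the argument and work inside the compact subspace $K$. Since the paper assumes all spaces are $T_2$, $K$ with the subspace topology is a compact Hausdorff space, hence normal (equivalently, regular), and $\{U_i\cap K:i\le k\}$ is a finite open cover of $K$. The whole point is that this normality lets us ``shrink'' the cover. First I would invoke the standard shrinking lemma for finite open covers of a normal space: there is an open (in $K$) cover $\{W_i:i\le k\}$ of $K$ with $\mathrm{cl}_K(W_i)\subset U_i\cap K$ for every $i$. One proves this by treating one index at a time, since $K\setminus\bigcup_{j\ne i}(U_j\cap K)$ is closed in $K$ and contained in the open set $U_i\cap K$, so normality inserts a $W_i$ between them while the modified family still covers $K$; iterating over $i\le k$ gives the shrinking. (A pointwise variant works equally well: for each $x\in K$ choose an index $i(x)$ with $x\in U_{i(x)}$ and, by regularity, an open neighborhood $V_x$ of $x$ in $K$ with $\mathrm{cl}_K(V_x)\subset U_{i(x)}$, then extract a finite subcover $V_{x_1},\dots,V_{x_m}$ and group these closures by their index.)

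Next I would set $K_i=\mathrm{cl}_K(W_i)$ (or, in the pointwise version, the union of the finitely many $\mathrm{cl}_K(V_{x_j})$ with $i(x_j)=i$). Each $K_i$ is closed in the compact space $K$, hence compact --- note this step needs only compactness of $K$, not the Hausdorff property --- and $K_i\subset U_i\cap K\subset U_i$ by construction. Because $\{W_i\}$ (respectively the $V_{x_j}$) cover $K$, we get $\bigcup_{i\le k}K_i=K$. The last requirement, that $K_i\neq\emptyset$ whenever $U_i\cap K\neq\emptyset$, is not guaranteed by the shrinking (a shrinking can empty out a member), so I would patch it by hand: for each $i$ with $U_i\cap K\neq\emptyset$ but $K_i=\emptyset$, pick a point $p_i\in U_i\cap K$ and replace $K_i$ by $K_i\cup\{p_i\}$. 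In a $T_2$ space singletons are closed, so the enlarged set is still a compact subset of $U_i$, the union is unchanged, and now $K_i\neq\emptyset$ exactly when $U_i\cap K\neq\emptyset$.

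The hard part, and the thing worth flagging, is that the naive candidate $K_i=K\setminus\bigcup_{j\ne i}U_j$ does \emph{not} work: it is indeed a compact subset of $U_i$, but a point lying in two or more of the $U_j$ is thrown out of every one of these set-differences, so their union misses all overlap points and fails to recover $K$. Overcoming this overlap obstruction is precisely what forces the shrinking (regularity) argument above, and it is the only place where the Hausdorffness of $X$ --- through the normality of the compact set $K$ --- is genuinely used; in a general (non-regular) topological space the statement can break down. Everything else is routine bookkeeping, so I expect the proof to be short once the shrinking lemma is in place.
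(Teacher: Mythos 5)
Your proof is correct. Note that this paper does not actually prove Lemma~\ref{l1}; it is imported verbatim from \cite[Lemma 2.3]{LL2023}, so there is no in-text argument to compare against. Your argument is the natural one and it is complete: you correctly localize to $K$, where compact plus Hausdorff gives normality (this matters, since $X$ itself is only assumed $T_2$, not regular, so the shrinking cannot be done in $X$); taking $K_i=\mathrm{cl}_K(W_i)$ gives compact sets inside the $U_i$ covering $K$; and you rightly notice that the nonemptiness clause is not automatic from the shrinking and patch it by adjoining a point of $U_i\cap K$, which costs nothing since finite unions of compact sets are compact (closedness of singletons is not actually needed there). For comparison, the other standard route proves the case $k=2$ by separating the disjoint compact sets $K\setminus U_1$ and $K\setminus U_2$ by disjoint open sets $V_1,V_2$ in the Hausdorff space $X$ and setting $K_i=K\setminus V_i$, then inducts on $k$; that version uses Hausdorffness of $X$ directly instead of normality of $K$, but needs the same nonemptiness patch. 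Your closing remark is also accurate: the lemma genuinely fails without some separation, e.g.\ in the one-point compactification of $\mathbb{Q}$ (compact, $T_1$, not Hausdorff) the cover $\{\mathbb{Q},\ \mathbb{Q}^{*}\setminus\{0\}\}$ of $K=\mathbb{Q}^{*}$ admits no such compact decomposition.
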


\begin{lemma}\label{l2}
(\cite[Theorem 10.2]{G1984}) A space $X$ is quasi-metrizable if and
only if there is a function $g: \omega\times X\to \tau$ such that
(i) $\{g(n, x): n\in \omega\}$ is a base at $x$ for each $x\in X$; (ii) $y\in g(n+1,
x)\Rightarrow g(n+1, y)\subset g(n, x)$ for each $x\in X$ and $n\in\mathbb{N}$.

\end{lemma}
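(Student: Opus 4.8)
The plan is to prove the two implications separately: the forward implication is a one-line computation, while the converse is the substantive part and rests on a non-symmetric (Frink-type) chaining construction.

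For the forward direction, suppose $d$ is a quasi-metric compatible with the topology of $X$ and put $g(n,x)=\{y\in X: d(x,y)<2^{-n}\}$. Since the family of $\varepsilon$-balls is a base, condition (i) holds. For (ii), if $y\in g(n+1,x)$ and $z\in g(n+1,y)$ then $d(x,z)\le d(x,y)+d(y,z)<2^{-(n+1)}+2^{-(n+1)}=2^{-n}$, so $z\in g(n,x)$; hence $g(n+1,y)\subseteq g(n,x)$, as required.

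For the converse I would first normalize: replacing $g(n,x)$ by $\bigcap_{k\le n}g(k,x)$ keeps the values open and keeps (i), and, applying the original (ii) at each level $j\le n$, preserves (ii); so I may assume $g(n+1,x)\subseteq g(n,x)$ for all $n$. Setting $V_n=\{(x,y): y\in g(n,x)\}$ and $V_0=X\times X$, condition (ii) is exactly the composition inequality $V_{n+1}\circ V_{n+1}\subseteq V_n$, where $(x,z)\in A\circ B$ means there is $y$ with $(x,y)\in A$ and $(y,z)\in B$. Define $\rho(x,y)=\inf\{2^{-n}: (x,y)\in V_n\}$, so that $\rho(x,y)=0$ exactly when $y\in\bigcap_n g(n,x)$. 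Because the $V_n$ are decreasing, $\rho(x,y)\le 2^{-n}$ iff $(x,y)\in V_n$, and then $V_n\circ V_n\subseteq V_{n-1}$ yields the relaxed triangle inequality $\rho(x,z)\le 2\max\{\rho(x,y),\rho(y,z)\}\le 2(\rho(x,y)+\rho(y,z))$; I emphasize that this derivation respects the composition order and uses no symmetry of $\rho$.

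Now define the candidate quasi-metric by chaining, $d(x,y)=\inf\{\sum_{i=0}^{k-1}\rho(x_i,x_{i+1}): x=x_0,x_1,\dots,x_k=y\}$. The triangle inequality for $d$ is immediate from the definition, and $d\le\rho$. The heart of the argument is the Frink-type estimate that $\rho$ and $d$ are uniformly comparable, say $\frac{1}{4}\rho\le d\le\rho$, the lower bound being proved by induction on the length of the connecting chain using only the relaxed triangle inequality above; this is the step I expect to be the main obstacle, since one must check that the classical (symmetric) Frink induction survives verbatim once the non-symmetric composition $V_{n+1}\circ V_{n+1}\subseteq V_n$ is tracked correctly. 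Granting the comparability, the $d$-balls are squeezed between the sets $\{y:\rho(x,y)<\varepsilon\}$, which by the normalization coincide with the $g(n,x)$, so the $d$-balls form a base at each point and $d$ induces the topology of $X$. Finally $d(x,y)=0$ iff $\rho(x,y)=0$ iff $y\in\bigcap_n g(n,x)=\{x\}$, the last equality holding because $\{g(n,x)\}_n$ is a neighborhood base at $x$ in the $T_2$ (hence $T_1$) space $X$; this is condition (i) of a quasi-metric, and the converse is complete.
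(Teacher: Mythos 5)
The paper gives no proof of this lemma at all: it is quoted verbatim from Gruenhage's Handbook article (Theorem 10.2; this is essentially Ribeiro's characterization of quasi-metrizability), so your attempt has to be measured against the standard proof of that result. Your forward direction is correct, and your setup for the converse is the right skeleton: the normalization $g(n,x)\mapsto\bigcap_{k\le n}g(k,x)$ does preserve (i) and (ii), condition (ii) is exactly $V_{n+1}\circ V_{n+1}\subseteq V_n$ for the entourages $V_n$, and the final two steps (compatibility of the topology, and $d(x,y)=0\Rightarrow y\in\bigcap_n g(n,x)=\{x\}$ by $T_1$) are fine once a suitable $d$ exists.

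The genuine gap is precisely the step you flagged and then ``granted'': the lower bound $\tfrac14\rho\le d$ for the chain infimum. The claim that the classical Frink induction survives \emph{verbatim} is not true, and the problem is not even non-symmetry: with only the two-term inequality $\rho(x,z)\le 2\max\{\rho(x,y),\rho(y,z)\}$, the split-the-chain induction does not close. Splitting a chain of total weight $S$ at the usual place produces three pieces bounded by $CS/2$, $S$, $CS/2$ (the middle \emph{single link} may carry almost all of $S$), and two nested applications of the two-term inequality give $\rho(x_0,x_k)\le 2\max\bigl\{CS/2,\,2\max\{S,\,CS/2\}\bigr\}=2CS$, so the constant doubles at every level and no uniform $C$ emerges; the same difficulty already occurs in the symmetric case, where Frink's estimate requires a more delicate combining scheme (e.g.\ merging adjacent links along a suitably weighted ordered binary tree), not the naive split. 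The standard proof avoids this estimate entirely: since each $V_n$ contains the diagonal, $V_{n+2}\circ V_{n+2}\circ V_{n+2}\subseteq (V_{n+2}\circ V_{n+2})\circ(V_{n+2}\circ V_{n+2})\subseteq V_{n+1}\circ V_{n+1}\subseteq V_n$, so after reindexing $W_n=V_{2n}$ one has the \emph{triple}-composition condition $W_{n+1}\circ W_{n+1}\circ W_{n+1}\subseteq W_n$. Kelley's metrization lemma applies to such a sequence with no symmetry hypothesis, and there the chain induction does close, because the three-term bound $2\max\{CS/2,\,S,\,CS/2\}\le CS$ is available for $C=2$; it yields $d\ge 0$ satisfying the triangle inequality with $W_n\subseteq\{(x,y):d(x,y)<2^{-n}\}\subseteq W_{n-1}$. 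This gives only the squeeze $g(2n,x)\subseteq B_d(x,2^{-n})\subseteq g(2n-2,x)$ rather than the bi-Lipschitz comparability $\tfrac14\rho\le d\le\rho$ you asserted, but the squeeze is all that compatibility and the separation condition require, and with it your concluding argument goes through unchanged.
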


\begin{lemma}\label{l3}
\cite[Lemma 2.3.1]{M1951} $\langle U_1, ..., U_n\rangle\subset
\langle V_1,..., V_m\rangle$ if and only if
$\bigcup_{j=1}^nU_j\subset \bigcup_{j=1}^mV_j$ and for every $V_i$
there exists $U_k$ such that $U_k\subset V_i$.
\end{lemma}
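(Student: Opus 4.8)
The plan is to prove both implications directly, the only topological input being that finite subsets of $X$ are closed, which is guaranteed since $X$ is assumed $T_2$ (hence $T_1$); this lets me use finite sets as legitimate test members of $CL(X)$. Throughout I may assume every $U_i$ and every $V_j$ is nonempty, since if some $U_i=\emptyset$ then $\langle U_1,\ldots,U_n\rangle=\emptyset$ and the inclusion holds trivially.

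For sufficiency, assume $\bigcup_{j=1}^n U_j\subset\bigcup_{j=1}^m V_j$ and that for each $V_i$ there is some $U_{k(i)}$ with $U_{k(i)}\subset V_i$. Take any $H\in\langle U_1,\ldots,U_n\rangle$. Then $H\subset\bigcup_{j=1}^n U_j\subset\bigcup_{j=1}^m V_j$, which is the first defining condition for membership in $\langle V_1,\ldots,V_m\rangle$. For the second, fix $V_i$; since $H\cap U_{k(i)}\neq\emptyset$ and $U_{k(i)}\subset V_i$, we get $H\cap V_i\neq\emptyset$. Hence $H\in\langle V_1,\ldots,V_m\rangle$. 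This half is a routine chase through the definition of $\langle\,\cdot\,\rangle$.

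For necessity, assume $\langle U_1,\ldots,U_n\rangle\subset\langle V_1,\ldots,V_m\rangle$. To obtain $\bigcup_j U_j\subset\bigcup_j V_j$, fix $x\in U_{i_0}$, choose $x_j\in U_j$ for each $j$ taking $x_{i_0}=x$, and set $H=\{x_1,\ldots,x_n\}$; this finite set is closed, lies in $\bigcup_j U_j$, and meets each $U_j$, so $H\in\langle U_1,\ldots,U_n\rangle\subset\langle V_1,\ldots,V_m\rangle$, whence $H\subset\bigcup_j V_j$ and in particular $x\in\bigcup_j V_j$. For the second condition, suppose for contradiction that some $V_i$ contains no $U_k$. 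Then for each $k$ pick $x_k\in U_k\setminus V_i$; the finite closed set $H=\{x_1,\ldots,x_n\}$ again lies in $\langle U_1,\ldots,U_n\rangle$, hence in $\langle V_1,\ldots,V_m\rangle$, forcing $H\cap V_i\neq\emptyset$ --- contradicting $x_k\notin V_i$ for all $k$.

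The one delicate point, and the main obstacle, is verifying that these finite test sets are genuine elements of $CL(X)$, i.e. that they are closed; this is exactly where the standing $T_2$ hypothesis enters, and it is also why the statement must be read with the $U_i$ nonempty, so that the choices $x_j\in U_j$ can be made. Everything else is bookkeeping with the definition of $\langle\,\cdot\,\rangle$.
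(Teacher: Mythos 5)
Your proof is correct, but there is nothing in the paper to compare it against line by line: the paper does not prove this lemma, it simply quotes it from Michael \cite[Lemma 2.3.1]{M1951}. What you have written is the standard self-contained argument: the sufficiency half is a routine unwinding of the definition of $\langle U_1,\ldots,U_n\rangle$, and the necessity half tests the inclusion on finite sets, which belong to $CL(X)$ precisely because the standing $T_2$ (hence $T_1$) hypothesis makes finite sets closed. Your version buys the reader a complete proof in place of a citation and makes explicit exactly where the separation axiom and the nonemptiness of the sets enter; the paper's citation buys brevity and defers these conventions to Michael.

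One caveat about your opening reduction. Discarding the case of an empty $U_i$ on the grounds that ``the inclusion holds trivially'' is legitimate only for the sufficiency direction, where the inclusion is the conclusion. For the necessity direction the inclusion is the \emph{hypothesis}, so its holding trivially gives you nothing, and in fact the lemma as literally stated can fail there: in $X=\mathbb{R}$ take $U_1=\emptyset$, $U_2=(5,6)$, $V_1=(0,1)$; then $\langle U_1,U_2\rangle=\emptyset\subset\langle V_1\rangle$, yet $\bigcup_j U_j=(5,6)\not\subset(0,1)=\bigcup_j V_j$. So nonemptiness of the $U_i$ is a genuine standing hypothesis (as in Michael's conventions), not a harmless normalization. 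You do reach the right resolution at the end, saying the statement ``must be read with the $U_i$ nonempty,'' but that reading should replace, rather than follow, the triviality claim at the start.
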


\begin{lemma}\label{l4}
Let $X$ be a countably compact space, if each closed countably
compact subset of $(CL(X), \tau_V)$ is a $G_\delta$-set, then $X$ is
compact metrizable.
\end{lemma}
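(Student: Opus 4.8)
The plan is to deduce that $X$ has a $G_\delta$-diagonal and then to appeal to Chaber's theorem, by which a countably compact space with a $G_\delta$-diagonal is compact and metrizable (see \cite{G1984}). Since $X$ is already assumed countably compact, the entire task reduces to producing a $G_\delta$-diagonal, and the hypothesis on $(CL(X),\tau_V)$ will be invoked only once, namely for the single closed set $F_1(X)=\mathcal{F}_1(X)$.

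First I would check that $F_1(X)$ is closed in $(CL(X),\tau_V)$: if $A\in CL(X)$ contains two distinct points $x,y$, choose disjoint open sets $U\ni x$ and $V\ni y$; then $\langle U,V,X\rangle$ is a Vietoris neighbourhood of $A$ disjoint from $F_1(X)$, since no singleton can meet both of the disjoint sets $U$ and $V$. Thus the complement of $F_1(X)$ is open. Because $X$ is $T_2$, the canonical homeomorphism $X\cong F_1(X)$ shows that $F_1(X)$ is a closed, countably compact subset of $(CL(X),\tau_V)$. The hypothesis then applies and gives $F_1(X)=\bigcap_{n\in\omega}W_n$ with every $W_n$ open in $(CL(X),\tau_V)$.

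Next I would introduce the map $q\colon X\times X\to (CL(X),\tau_V)$, $q(x,y)=\{x,y\}$, and verify its continuity on the Vietoris subbase $\{U^+,U^-:U\ \text{open}\}$: indeed $q^{-1}(U^+)=U\times U$ and $q^{-1}(U^-)=(U\times X)\cup(X\times U)$ are open. Because $\{x,y\}$ is a singleton precisely when $x=y$, we have $q^{-1}(F_1(X))=\Delta$, the diagonal of $X\times X$. Hence $\Delta=\bigcap_{n\in\omega}q^{-1}(W_n)$ exhibits $\Delta$ as a $G_\delta$-subset of $X\times X$, that is, $X$ has a $G_\delta$-diagonal, and Chaber's theorem finishes the argument.

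The step needing the most care is confirming that $F_1(X)$ is legitimately a closed countably compact subset, so that the hypothesis may be applied to it; the continuity of $q$ is a routine subbase computation, and the passage from a $G_\delta$-diagonal to compact metrizability is immediate from the quoted theorem. I would also make sure the cited reference supplies the statement in the exact form ``countably compact $+$ $G_\delta$-diagonal $\Rightarrow$ compact metrizable,'' rather than only the compactness half, supplementing it if necessary with the classical fact that a compact Hausdorff space with a $G_\delta$-diagonal is metrizable.
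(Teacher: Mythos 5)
Your proof is correct and follows essentially the same route as the paper: identify $\mathcal{F}_1(X)$ as a closed, countably compact copy of $X$, use the hypothesis to write it as a $G_\delta$-set, transfer this to a $G_\delta$-diagonal for $X$, and conclude by Chaber's theorem (\cite[Theorem 2.14]{G1984}) together with the metrizability of compact spaces with a $G_\delta$-diagonal. The only difference is cosmetic: where the paper defers the diagonal step to the proof of \cite[Lemma 2.5]{LL2023}, you make it explicit via the continuous map $(x,y)\mapsto\{x,y\}$, which is exactly the right computation.
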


\begin{proof}
Since $\mathcal{F}_1(X)$ is homeomorphic to $X$, it follows that
$\mathcal{F}_1(X)$ is a closed countably compact subset of $(CL(X),
\tau_V)$, hence it is a $G_\delta$-set in $(CL(X), \tau_V)$. By the proof
of \cite[Lemma 2.5]{LL2023}, we conclude that $X$ has a $G_\delta$-diagonal, hence
$X$ is compact metrizable by \cite[Theorem 2.14]{G1984}.
\end{proof}

\begin{remark}
 Let $X$ be countably compact; from \cite[Theorem
1.2]{NS1988}, it follows that $(CL(X), \tau_V)=(CL(X), \tau_{loc fin})$. Therefore, if
each closed countably compact $(CL(X), \tau_{loc fin})$ is a
$G_\delta$-set, then $X$ is compact metrizable.
\end{remark}

A space $(X, \tau)$ is a {\it $\gamma$-space} if there exists a
function $g: \omega\times X \to \tau$ such that (i) $\{g(n, x): n\in
\omega\}$ is a base at $x$ for each $x\in X$; (ii) for each $n\in \omega$ and $x\in
X$, there exists $m\in \omega$ such that $y\in g(m, x)$ implies
$g(m, y)\subset g(n, x)$.

Recalled that a space $X$ is a $D_1$-space \cite{A1966} if every
closed subset of $X$ has a countable local base.

Now we can prove one of main theorems of our paper.

\begin{theorem}\label{t1}
The following statements are equivalent for a space $X$.

\begin{enumerate}
\item $(CL(X), \tau_V)$ is quasi-metrizable;

\smallskip
\item $(CL(X), \tau_V)$ is a $\gamma$-space;

\smallskip
\item Each closed countably compact subset of $(CL(X), \tau_V)$ has
a countable base;

\smallskip
\item $X$ is separable, metrizable and $NI(X)$ is compact.
\end{enumerate}

\end{theorem}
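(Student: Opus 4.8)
I would prove the four statements equivalent by the cycle $(1)\Rightarrow(2)\Rightarrow(3)\Rightarrow(4)\Rightarrow(1)$. The implication $(1)\Rightarrow(2)$ is immediate, since the function $g$ provided by Lemma \ref{l2} already witnesses the (weaker) $\gamma$-space condition: given $n$ and $x$, the choice $m=n+1$ works because $y\in g(n+1,x)$ forces $g(n+1,y)\subseteq g(n,x)$. For $(2)\Rightarrow(3)$, note that being a $\gamma$-space is hereditary and implies first countability, so every closed countably compact subspace $F$ of $(CL(X),\tau_V)$ is itself a countably compact $\gamma$-space; following the $G_\delta$-diagonal argument used in Lemma \ref{l4} (via \cite[Theorem 2.14]{G1984}) such an $F$ is compact and metrizable, hence second countable, i.e. it has a countable base.

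The substantial structural step is $(3)\Rightarrow(4)$, which I would carry out by reading the three properties of $X$ off carefully chosen closed countably compact subspaces of $(CL(X),\tau_V)$. Two facts organize this: $\mathcal{F}_1(X)$ is a closed copy of $X$, and $(NI(X))^{+}=CL(NI(X))$ is closed in $(CL(X),\tau_V)$ (its complement $(I(X))^{-}$ is open because $I(X)$ is open). First I would show that $NI(X)$ is countably compact: otherwise $NI(X)$ contains a set $\{p_{n}:n\in\omega\}$ that is closed and discrete in $X$ (an accumulation point of non-isolated points is again non-isolated), and, using a convergent sequence attached to each $p_{n}$, one assembles inside $(CL(X),\tau_V)$ a closed countably compact subspace with no countable base, contradicting (3). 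Once $NI(X)$ is countably compact, $\mathcal{F}_1(NI(X))\cong NI(X)$ is a closed countably compact subspace of $(CL(X),\tau_V)$, so by (3) it has a countable base; being countably compact it is therefore compact and metrizable. It then remains to obtain that $X$ itself is separable and metrizable: here I would argue that the hypothesis forces $X$ to be first countable (indeed a $D_1$-space), so that the open discrete set $I(X)$ is countable, and then combine the compact metrizable core $NI(X)$ with the countable discrete set $I(X)$ to conclude that $X$ is second countable, hence separable metrizable.

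For $(4)\Rightarrow(1)$ I would construct a $g$-function satisfying Lemma \ref{l2} explicitly. Write $C=NI(X)$ and $D=I(X)$, fix a compatible metric $\rho\le 1$ on the separable metrizable space $X$, and enumerate the countable discrete set $D=\{d_{m}:m\in\omega\}$. The key observation is that the \emph{upper} part of a Vietoris neighbourhood of $A$ has a countable base: since each $\{d\}$ with $d\in D$ is clopen and $A\cap C$ is compact, every open $U\supseteq A$ contains a set of the form $N(A\cap C,2^{-n})\cup(A\cap D)$, where $N(\cdot,\varepsilon)$ denotes the $\rho$-ball neighbourhood. Accordingly I would let $g(n,A)=\langle U_{1},\dots,U_{k}\rangle$ be the basic set (Lemma \ref{l3}) whose members are finitely many balls $B(a_{i},2^{-n})$ covering $A\cap C$ together with the singletons $\{d_{m}\}$ for the indices $m\le n$ with $d_{m}\in A$, the union of the $U_{i}$ being exactly $N(A\cap C,2^{-n})\cup(A\cap D)$. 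Lemma \ref{l1} lets one split the compact set $A\cap C$ along these finite covers, which is what makes the verifications go through. That $\{g(n,A)\}_{n}$ is a base at $A$ uses compactness of $C$ for the upper part and a countable base of $X$ for the lower (hitting) part; the nesting $B\in g(n+1,A)\Rightarrow g(n+1,B)\subseteq g(n,A)$ follows from the triangle inequality for $\rho$ (with the radii halving, $2^{-(n+1)}+2^{-(n+1)}=2^{-n}$) together with the monotone, one-sided way the enumeration of $D$ enters.

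The main obstacle is precisely this construction in $(4)\Rightarrow(1)$: one must reconcile upper Vietoris control over the possibly infinite set $A\cap D$ of isolated points with the \emph{strict} nesting condition (ii) of Lemma \ref{l2}, and it is here that both hypotheses are indispensable. Compactness of $NI(X)$ collapses the continuous part of every $A$ to finitely many balls, so that the metric $\rho$ contributes symmetrically and the upper neighbourhoods remain countably generated; countability and isolation of $I(X)$ are what allow the asymmetric, enumeration-driven bookkeeping of isolated points that yields a genuine quasi-metric rather than a metric (recall $(CL(X),\tau_V)$ is metrizable only when $X$ is compact). Finally, this theorem answers Conjecture 1 negatively: it suffices to produce a space $X$ satisfying (4) that is not of the form $C\oplus D$ --- for instance, take $NI(X)$ to be a nontrivial convergent sequence and attach to each of its points a sequence of isolated points converging to it, so that $NI(X)$ is compact and $X$ is separable metrizable, while no clopen partition separates $X$ into a compact summand and a countable discrete summand.
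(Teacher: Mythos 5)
Your cycle of implications is the same as the paper's, and (1)$\Rightarrow$(2) is correct, but both substantive implications contain genuine gaps.

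The fatal gap is in (4)$\Rightarrow$(1): condition (ii) of Lemma~\ref{l2} does \emph{not} ``follow from the triangle inequality with the radii halving''. Since $g(n,A)$ uses only \emph{finitely many} balls, Lemma~\ref{l3} forces $\bigcup g(n+1,B)\subseteq\bigcup g(n,A)$, and this fails for legitimate instantiations of your prescription. Concretely, let $X=NI(X)=[0,1]$, $r=2^{-n}$, $A=[0,1/2]$; choose the $r$-balls for $A$ so that the largest center is $1/2-0.9r$ (they do cover $A$, with union $[0,1/2+0.1r)$), and choose the $(r/2)$-balls for $A$ to include one centered at $1/2$. Then $B=[0,1/2+0.4r]\in g(n+1,A)$, yet \emph{any} finite family of $(r/2)$-balls centered in $B$ and covering $B$ must include a ball centered at some $b>1/2-0.1r$, and that ball contains the point $1/2+0.3r\notin\bigcup g(n,A)$; so $g(n+1,B)\not\subseteq g(n,A)$ no matter how $g(n+1,B)$ is chosen. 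Fixing ``master'' covers of $NI(X)$ in advance with halving radii does not repair this (offset grids fail the same way). This is exactly the difficulty the paper's proof is built around: it constructs covers $\mathcal{G}_n$ of $NI(X)$ whose radii $2^{-q_n}$ decrease \emph{adaptively}, requiring $2^{-q_{n+1}}\le\frac14\, d\bigl(K_i(n),X\setminus B(x_{n,i},2^{-q_n})\bigr)$ via the compact decompositions of Lemma~\ref{l1}, and then proves a Claim that yields the nesting; your sketch has no analogue of this. The isolated points raise a second, independent problem: when $I(X)$ accumulates on $NI(X)$ (the paper's Case 2), a set $B\in g(n+1,A)$ can acquire new isolated points of small index lying inside the balls, and then the member $A\cap D$ of $g(n,A)$ contains no member of $g(n+1,B)$, again violating Lemma~\ref{l3}; the paper's $s_n$/$q(m)$ bookkeeping, which ties the enumeration of $I(X)$ to the distance from $NI(X)$, exists precisely to prevent this, and an arbitrary enumeration will not do.

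In (3)$\Rightarrow$(4) the key steps are asserted rather than proved. First, (3) must be read as ``countable \emph{outer} (neighborhood) base in $(CL(X),\tau_V)$'' --- this is how the paper uses it, applying it to singletons $\{A\}$ (closed and countably compact) to conclude that the hyperspace is first countable --- whereas your (2)$\Rightarrow$(3) only produces second countability of the subspace $F$; under that weaker reading singletons give no information, so nothing makes $X$ first countable, and the ``convergent sequence attached to each $p_n$'' need not exist (a non-isolated point of a Hausdorff space need not be a limit of a nontrivial sequence). Second, the closed countably compact subspace ``with no countable base'' that is to produce the contradiction is never constructed, and that construction is the entire content of the step. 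Third, ``first countable (indeed $D_1$) $\Rightarrow I(X)$ countable'' is false: an uncountable discrete space is a first countable $D_1$-space. Countability of $I(X)$ needs separability of $X$, which is obtained by applying (3), in the outer-base reading, to the single point $X\in CL(X)$; this, together with $D_1$-ness, is what \cite{LL2022} and \cite{DL1995} supply in the paper's argument, after which Lemma~\ref{l4}, applied inside the closed subspace $(NI(X))^+=CL(NI(X))$, gives compact metrizability of $NI(X)$. Finally, your closing example against Conjecture 1 is not automatic: if the attached sequences shrink fast enough, the space you describe is compact (homeomorphic to $\omega^2+1$), hence trivially of the conjectured form $C\oplus D$; a safe witness is $\{(0,0)\}\cup\{(1/n,1/m):n,m\in\mathbb{N}\}\subset\mathbb{R}^2$, where no neighborhood of the origin has compact closure, so the origin cannot lie in a compact clopen summand.
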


\begin{proof}
The implication (1)$\Rightarrow$ (2) is trivial. The implication (2) $\Rightarrow$ (3) by \cite[Corollary 10.8 (ii), Theorem 10.6(iii)]{G1984}. We only need to prove (3) $\Rightarrow$ (4) and (4) $\Rightarrow$ (1).

(3) $\Rightarrow$ (4). Since $(CL(X), \tau_V)$ is first-countable, it follows that $X$
is separable and a $D_1$-space \cite[Theorem 5.15]{LL2022}. Hence $NI(X)$
is countably compact by \cite[Theorem 1]{DL1995}, then $NI(X)$ is
compact and metrizable by Lemma ~\ref{l4}. Hence $X$ is separable and metrizable
\cite[Theorem 7]{DL1995}.

(4) $\Rightarrow$ (1). Let $(X, d)$ be a separable metrizable space with $NI(X)$ compact.
If $X$ is compact metrizable, then it follows from \cite[Theorem
3.3]{H2003} that $(CL(X), \tau_V)$ is compact metrizable, hence it
is quasi-metrizable. Now we assume $X$ is not compact. Then, by
induction, we can construct a sequence of finite open covers
$\{\mathcal{G}_n: n\in\mathbb{N}\}$ of $NI(X)$ and a sequence of
finite compact covers $\{\mathcal{K}_{n}: n\in\mathbb{N}\}$ of
$NI(X)$ such that the following (a)-(c) hold.

\smallskip
(a) For each $n\in\mathbb{N}$, the family $\mathcal{G}_n=\{B(x_{n,
i}, 1/2^{q_n}): i\leq l_n\}$ is a cover of $NI(X)$, where $l_n$ is a
finite subset of $\mathbb{N}$,  $\{x_{n, i}: i\leq l_n\}\subset
NI(X)$ and $q_{n}<q_{n+1}$;

\smallskip
(b) For each $n\in\mathbb{N}$, each element of $\mathcal{G}_{n+1}$
is contained in some element in $\mathcal{G}_n$, and each element of
$\mathcal{G}_n$ contains some element of $\mathcal{G}_{n+1}$.

\smallskip
(c) For each $n\in\mathbb{N}$, $\mathcal{K}_{n}=\{K_i(n): i\leq
l_n\}$ such that $K_i(n)\subset B(x_{n, i}, 1/2^{q_n})$  for each
$i\leq l_n$ and $NI(X)=\bigcup \{K_i(n): i\leq l_n\}$, where each
$K_i(n)$ is compact.

\smallskip
Indeed, the family $\{B(x, 1/2): x\in NI(X)\}$ is an open cover of
$NI(X)$; then since $NI(X)$ is compact, without loss of generality,
we may assume that each $B(x, 1/2)\neq X$, then there exists a
minimal and finite subset $\{x_{1, i}: i\leq l_1\}$ of $NI(X)$ such
that $\mathcal{G}_1=\{B(x_{1, i}, 1/2): i\leq l_1\}$ is a finite
cover of $NI(X)$. Moreover, let $q_1=1$. From Lemma~\ref{l1}, there
is finite cover $\mathcal{K}_{1}=\{K_i(1): i\leq l_1\}$ of $NI(X)$
consisting of compact subsets of $NI(X)$ such that $K_i(1)\subset
B(x_{1, i}, 1/2^{q_1})$ and $NI(X)=\bigcup \{K_i(1): i\leq l_1\}$.
Assume that $\mathcal{G}_n=\{B(x_{n, i}, 1/2^{q_n}): i\leq l_n\}$
and $\mathcal{K}_{n}=\{K_i(n): i\leq l_n\}$ have been constructed,
where $l_{n}, q_{n}\in\mathbb{N}$ and $x_{n, i}\in NI(X)$ for each
$i\leq l_{n}$ and $NI(X)=\bigcup_{i\leq l_n}K_i(n)$. Let $q_{n+1}$ be
the smallest natural number such that $q_{n+1}>q_n$ and
 $$1/2^{q_{n+1}}\leq d(K_i(n), X\setminus B(x_{n, i}, 1/2^{q_n}))/4$$ for all $i\leq l_n$. Fix any $i\leq l_{n}$. Then
$\{B(x, 1/2^{q_{n+1}}): x\in K_i(n)\}$ is an open cover of $K_i(n)$
and $B(x, 1/2^{q_{n+1}})\subset B(x_{n, i}, 1/2^{q_n})$ for each
$x\in K_i(n)$, hence there is a minimal and finite subcover
$\mathcal{B}'_{n+1, i}$ of $\{B(x, 1/2^{q_{n+1}}): x\in K_i(n)\}$ in
$K_i(n)$. Now let $\mathcal{G}_{n+1}=\bigcup\{\mathcal{B}'_{n+1, i}:
i\leq l_n\}$. From Lemma~\ref{l1}, there are finite compact subsets
$\{K_i(n+1): i\leq l_{n+1}\}$ such that $K_i(n+1)\subset B(x_{n+1,
i}, 1/2^{q_n+1})$ and $NI(X)=\bigcup \{K_i(n+1): i\leq l_{n+1}\}$.
Then it is straightforward to verify that $\{\mathcal{G}_n: n\in
\mathbb{N}\}$ and $\{\mathcal{K}_{n}: n\in\mathbb{N}\}$ satisfy
 (a)-(c).

Now, for a closed subset $K\subset NI(X)$, let
$$\mathcal{G}'_n(K)=\{B(x_{n, i}, 1/2^{q_n})\in \mathcal{G}_n:
B(x_{n, i}, 1/2^{q_n})\cap K\neq \emptyset\};$$ then
$\mathcal{G}'_n(K)$ is an open cover of $K$. Clearly,
$\{\mathcal{G}'_{n}(K): n\in \mathbb{N}\}$ satisfies (1)-(2) above.
For each $n\in\mathbb{N}$ and closed subset $K\subset NI(X)$, we
write $\mathcal{G}'_n(K)=\{U_{n, m}(K): m\leq r_{n, K}\}$, where
$r_{n, K}\in\mathbb{N}$. Moreover, for any closed subsets $K$ and
$L$ in $NI(X)$ with $K\subset L$, we have $\mathcal{G}'_n(K)\subset
\mathcal{G}'_n(L)$. First, we prove that the following claim holds.

\smallskip
{\bf Claim:} For any compact subsets $K$ and $L$ in $NI(X)$, if
$K\subset\bigcup\mathcal{G}'_{n+1}(L)$ for some $n\in\mathbb{N}$,
then, for any $m\leq r_{n+1, K}$, we have $U_{n+1, m}(K)\subset
U_{n, j}(L)$ for some $U_{n, j}(L)\in\mathcal{G}'_{n}(L)$.

\smallskip
Since $U_{n+1, m}(K)\cap K\neq \emptyset$ and
$K\subset\bigcup\mathcal{G}'_{n+1}(L)$, we have $U_{n+1, m}(K)\cap
U_{n+1, m'}(L)\neq \emptyset$ for some $m'\leq r_{n+1, L}$; then
$U_{n+1, m'}(L)\in \mathcal{B}'_{n+1, i}$, and it is obvious that
$U_{n+1, m'}(L)\cap K_i(n)\neq\emptyset$ for some $i\in\mathbb{N}$ by our
construction. Pick any $U_{n, j}(L)\in\mathcal{G}'_{n}(L)$ such that
$K_i(n)\subset U_{n, j}(L)$ in the construction. Fix any $y\in
U_{n+1, m}(K)$; then $d(K_i(n), y)\leq d(x, y)$ for any $x\in K_i(n)\cap
U_{n+1, m'}(L)$. Pick any $x'\in U_{n+1, m}(K)\cap U_{n+1, m'}(L)$;
then $d(K_i(n), y)\leq d(x, y)\leq d(x, x')+d(x',
y)<(1/2^{q_{n+1}}+1/2^{q_{n+1}})\times 2\leq 4\times d(K_i(n),
X\setminus U_{n, j}(L))/4 =d(K_i(n), X\setminus U_{n, j}(L))$, thus
$y\in U_{n, j}(L)$. Therefore, we have $U_{n+1, m}(K)\subset U_{n,
j}(L)$. The proof of Claim is completed.

Since $X=I(X)\cup NI(X)$ is separable and metrizable, it follows
that $I(X)$ is a countable set. Hence let $I(X)=D=\{d_n:
n\in\mathbb{N}\}$. Moreover, for any closed subset $A$ of $X$, there
exist subset $D_1(A)$ of $D$ and a closed subset $K(A)$ of $NI(X)$
such that $A=D_1(A)\cup K(A)$. Next we define a $g$-function $G(n,
A)$ satisfies Lemma~\ref{l2}. We consider the following two cases.

{\bf Case 1:} $X=D\oplus NI(X)$, where $D$ is a countable discrete
subspace and $NI(X)$ is a compact metrizable space. Without loss of
generality, we may assume $d(D, NI(X))=1$. Take any closed subset
$A$ of $X$. We define $G(k, A)$ as follows (1$^{'}$)-(3$^{'}$):

\smallskip
(1$^{'}$) If $D_1(A)=\emptyset$, then put $$G(k, A)=\langle U_{k,
1}(K(A)), ..., U_{k, r_{k, K(A)}}(K(A))\rangle.$$

\smallskip
(2$^{'}$) If $D_{1}(A)$ is a finite set, then let
$D_1(A)=\{d_{n_i(A)}: i\leq r(A)\}\neq\emptyset$, where each
$n_i(A)\in\mathbb{N}$ and $r(A)\in\mathbb{N}$; now put $$G(k,
A)=\langle \{d_{n_1(A)}\}, ..., \{d_{n_{r(A)}(A)}\}, U_{k, 1}(K(A)),
..., U_{k, r_{k, K(A)}}(K(A))\rangle$$ when $K\neq\emptyset$, and put
$$G(k, A)=\langle \{d_{n_1(A)}\}, ..., \{d_{n_{r(A)}(A)}\}\rangle$$
when $K=\emptyset$.

\smallskip
(3$^{'}$) If $D_{1}(A)$ is an infinite set, then let
$D_1(A)=\{d_{n_{j}(A)}: j\in\mathbb{N}\}$ such that $\{n_{j}(A):
j\in\mathbb{N}\}$ is an infinite subsequence of $\mathbb{N}$ with
$n_{i}(A)<n_{j}(A)$ if $i<j$; now put $$G(k, A)=\langle
\{d_{n_{1}(A)}\}, ..., \{d_{n_{k}(A)}\}, D_1(A), U_{k, 1}(K(A)),
..., U_{k, r_{k, K(A)}}(K(A))\rangle$$ when $K\neq\emptyset$, and put
$$G(k, A)=\langle \{d_{n_1(A)}\}, ..., \{d_{n_{r(A)}(A)}\}\rangle$$
when $K=\emptyset$.

\smallskip
We prove $G(n, A)$ satisfies (i) and (ii) in Lemma ~\ref{l2}. We
divide the proof into the following cases.

\smallskip
{\bf Subcase 1.1:} $D_1(A)$ is infinite and $K(A)\neq \emptyset$.

\smallskip
First, we prove $G(n, A)$ satisfies (i). For each $n\in\mathbb{N}$,
put $D_{1n}(A)=\{d_{n_i(A)}: i\leq n\}$. Let $\langle V_1, ...,
V_l\rangle$ be a neighborhood of $A$ in $(CL(X), \tau_V)$; then each
$V_i$ either contains some $d_{n_i(A)}$ or $V_i\cap K(A)\neq
\emptyset$. Since $K(A)\subset \bigcup\{V_i: V_i\cap K(A)\neq
\emptyset\}$, we rewrite $\{V_i: V_i\cap K(A)\neq
\emptyset\}=\{V_{i_j}: j\leq p\}$, where $p\in\mathbb{N}$ and $p\leq
l$. By Lemma ~\ref{l1}, for each $j\leq p$, there is a nonempty
compact subset $K_j(A)\subset V_{i_j}$ such that $\bigcup\{K_{j}(A):
j\leq p\}=K(A)$. For each $j\leq l$ with $V_{j}\cap K(A)=\emptyset$,
put $m_{j}=\min\{n_{i}: d_{n_{i}}\in V_{j}\}$. Pick any
$n\in\mathbb{N}$ such that $1/2^n <\min\{d(K_j(A), X\setminus
V_{i_j})/2: j\leq p\}$ and $n>\max\{m_{j}: j\leq l, V_{j}\cap
K(A)=\emptyset\}$; then $G(n, A)\subset \langle V_1, ...,
V_l\rangle$. Indeed, for each $j\leq p$, if $B(x, 1/2^n)\cap
K_j(A)\neq\emptyset$, then $B(x, 1/2^n)\subset V_{i_j}$; since
$\mathcal{G}'_n(K(A))=\bigcup_{j\leq p}\mathcal{G}'_n(K_{j}(A))$, it
follows that each element $U_{n, m}(K(A))$ in $\mathcal{G}'_n(K(A))$
is contained in some $V_{i_j}$, and each $V_{i_j}$ contains some
element $U_{n, m}(K(A))$ of $\mathcal{G}'_n(K(A))$. Then it is easy
to see that $$D_{1}(A)\cup(\bigcup\{U_{n, m}(K(A)): m\leq r_{n,
K(A)}\}) \subset \bigcup\{V_i: i\leq l\}$$ and each $V_i$ contains
either some $d_{n_{i}(A)}$ or $U_{n, m}(K(A))$. Therefore, $G(n,
A)\subset\langle V_1, ..., V_l\rangle$ by Lemma ~\ref{l3}.
Therefore, $\{G(n, A): n\in\mathbb{N}\}$ is a local base of $A$ in
$(CL(X), \tau_V)$ .

Now, we prove $G(n, A)$ satisfies (ii). Take any $B=D_1(B)\cup
K(B)\in G(n+1, A)$. We prove that $G(n+1, B)\subset G(n, A)$.
Clearly, $D_{1}(B)\subset D_{1}(A)$, $K(B)\subset
\bigcup\mathcal{G}'_{n+1}(K(A))$ and $U_{n+1, m}(K(A))\cap K(B)\neq
\emptyset$ for each $m\leq r_{n+1, K(A)}$. Hence
$\mathcal{G}'_{n+1}(K(A))\subset \mathcal{G}'_{n+1}(K(B))$. From
Claim, it follows that, for any $m\leq r_{n+1, K(B)}$, we have
$U_{n+1, m}(K(B))\subset U_{n, j}(K(A))$ for some $U_{n,
j}(K(A))\in\mathcal{G}'_{n}(K(A))$. Then
$$\bigcup\mathcal{G}'_{n+1}(K(B))\subset \bigcup
\mathcal{G}'_n(K(A)).$$ Moreover, since each element of
$\mathcal{G}'_{n}(K(A))$ contains some element of
$\mathcal{G}'_{n+1}(K(A))$ and $\mathcal{G}'_{n+1}(K(A))\subset
\mathcal{G}'_{n+1}(K(B))$, we conclude that each element of
$\mathcal{G}'_{n}(K(A))$ contains some element of
$\mathcal{G}'_{n+1}(K(B))$. Now, by Lemma ~\ref{l3} and
$D_{1(n+1)}\subset B$, we have $G(n+1, B)\subset G(n, A)$.

The proofs of the following two cases are similar to Subcase 1.1,
thus we omit the proofs of them.

\smallskip
{\bf Subcase 1.2:} $D_1$ is finite and $D_1(A)\neq \emptyset$,
$K(A)\neq \emptyset$.

\smallskip

\smallskip
{\bf Subcase 1.3:} $D_{1}(A)=\emptyset$ or $K(A)=\emptyset$.

\smallskip
{\bf Case 2:} $X=D\cup NI(X)$ such that $D$ is a countable discrete
subspace and any open neighborhood of $NI(X)$ is not compact.

\smallskip
Then we can choose a countably and decreasingly local base $\{U_n:
n\in\mathbb{N}\}$ of $NI(X)$ in $X$ such that $U_1=X$,
$|(U_n\setminus U_{n+1})\cap D|=\omega$ for each $n\in\mathbb{N}$.
For each $n\in\mathbb{N}$, let $D_n=(U_n\setminus U_{n+1})\cap D$
and enumerate it as $\{a_{n, j}: j\in \mathbb{N}\}$. We rewrite
$D=\{a_m: m\in \mathbb{N}\}$ such that, for each $m\in\mathbb{N}$,
we have $a_m=a_{i, j}$, where each $m=(i+j-1)(i+j-2)/2+j$, $i,
j\in\mathbb{N}$. Moreover, for each $n$, put $d(X\setminus U_n,
NI(X))=s_n>0$; then $s_i>s_j$ for any $i<j$.

Let $A$ be an arbitrary closed subset of $X$, $A=D_{1}(A)\cup K(A)$
with $K(A)\subset NI(X)$ compact and $D_{1}(A)=\{a_{n_k}: k\leq
r(A)\}\subset D$ for some $r(A)\in\mathbb{N}$ or
$D_{1}(A)=\{a_{n_k}: k\in \mathbb{N}\}\subset D$, where
$n_k<n_{k+1}$ for each $k$.

For any $m$, let $$p_m=\max\{i: a_{n_k}=a_{i, j}, k\leq m\}$$ and
$$q(m)=\min\{j: 1/2^{q_j}<s_{p_m}\}.$$ Then
$\mathcal{G}'_{q(m)}(K(A))=\{U_{q(m), 1}(K(A)), ..., U_{q(m),
r_{q(m), K(A)}}(K(A))\}$. We define $G(k, A)$ as follows
(1$^{''}$)-(2$^{''}$):

\smallskip
(1$^{''}$) If $D_1(A)$ is finite, then put $$G(k, A)=\langle
U_{q(k), 1}(K(A)), ..., U_{q(k), r_{q(k), K(A)}}(K(A))\rangle$$ for
each $k\in\mathbb{N}$ if $D_1(A)=\emptyset$; put $$G(k, A)=\langle
\{a_{n_1}\}, ..., \{a_{n_{r(A)}}\}, U_{q(k), 1}(K(A)), ..., U_{q(k),
r_{q(k), K(A)}}(K(A))\rangle$$ if $D_1(A)\neq\emptyset$ and
$K(A)\neq\emptyset$;  put $$G(k, A)=\langle \{a_{n_1}\}, ...,
\{a_{n_{r(A)}}\}\rangle$$ if $K(A)=\emptyset$.

\smallskip
(2$^{''}$) If $D_{1}(A)$ is an infinite set, then put $$G(k,
A)=\langle \{a_{n_{1}}\}, ..., \{a_{n_{k}}\}, D_1(A), U_{q({k}),
1}(K(A)), ..., U_{q({k}), r_{q({k}), K(A)}}(K(A))\rangle$$ if
$K(A)\neq\emptyset$, and put $$G(k, A)=\langle \{a_{n_1(A)}\}, ...,
\{a_{n_{r(A)}(A)}\}\rangle$$ if $K(A)=\emptyset$.

We prove $G(m, A)$ satisfies (i) and (ii) for the case of $D_{1}(A)$
being infinite and $K(A)\neq\emptyset$. The proofs of the other
cases are similar to Case~1.

Let $\langle V_1, ..., V_l\rangle$ be an any open neighborhood of $A$.
Put$$k'=\max\{\min\{j: a_{n_j}\in V_i, V_i\cap D_{1}(A)\neq\emptyset\}: i\leq l\}$$ (note:
$\min\{j: a_{n_j}\in V_i, V_i\cap D'\neq\emptyset\}=0$ if $V_i\cap
D(A)=\emptyset)$. Clearly, $K(A)\subset \bigcup\{V_i: V_i\cap K(A)\neq
\emptyset\}$, and we can enumerate $\{V_i: V_i\cap K(A)\neq
\emptyset\}$ as $\{V_{i_j}: j\leq l^{'}\}$. By Lemma~\ref{l1}, there
exists a collection of compact subsets $\{K_j: j\leq l^{'}\}$ such
that $K(A)=\bigcup \{K_j: j\leq l^{'}\}$ and $K_j\subset V_{i_j}$ for
each $j\leq l^{'}$. Let $k''$ be the smallest natural number such
that $1/2^{k''}<\min\{d(K_j, X\setminus V_{i_j})/2, j\leq l^{'}\}$.
Let $k=\max\{k', k''\}$; then each $V_i$ either contains some
element of $\mathcal{G}'_{q({k})}(K(A))=\{U_{q({k}), j}(K(A)): j\leq
r_{q(k), K(A)}\}$ or $a_{n_i}$ for some $i\leq k$, and each element
of $\mathcal{G}'_{q(p_{k})}(K(A))$ is contained in some $V_i$. By
Lemma ~\ref{l3}, it is easy to see that
$$G(k, A)=\langle \{a_{n_1}\}, ..., \{a_{n_k}\}, D_{1}(A), U_{q({k}), 1}, ..., U_{q({k}), r_{q({k}), K(A)}}\rangle\subset \langle V_1, ..., V_l\rangle.$$ Hence $\{G(k, A): k\in \mathbb{N}\}$ is a local base of $A$.

Now, we prove $G(k, A)$ satisfies (ii). Take any $B\in G(k+1, A)$;
then $B=D_1(B)\cup K(B)$. Clearly, $\{a_{n_i}: i\leq k+1\}\subset
D_1(B)$ and $K(B)$ is a compact subset of $NI(X)$. For any
$U\in\mathcal{G}'_{q(k+1)}(K(A))$, $U\cap K(B)\neq\emptyset$, then
$\mathcal{G}'_{q(k+1)}(K(A))\subset \mathcal{G}'_{q(k+1)}(K(B))$.
Similar to the proof of Subcase 1.2, we can prove that $G(k+1,
B)\subset G(k, A)$.
\end{proof}

From Theorem~\ref{t1}, it follows that Conjecture 1 is negative. Moreover, in \cite{LL2022}However, the following two questions are unknown for us.

\begin{question}
If $X$ is quasi-metrizable, is $(\mathcal{K}(X), \tau_V)$
quasi-metrizable?
\end{question}

Put $F_C(X)=\{S: S$ is a convergent sequence (with limit point) of $X,
|S|=\omega\}$.

\begin{question}\label{q1}
If $X$ is quasi-metrizable, is $(F_C(X), \tau_V)$ quasi-metrizable?
\end{question}

The following proposition gives a partial answer to Question~\ref{q1}.

\begin{proposition}
If $X$ is quasi-metrizable, then $(\mathcal{F}(X), \tau_V)$ is
quasi-metrizablie.
\end{proposition}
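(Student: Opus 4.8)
The plan is to build a $g$-function on $\mathcal{F}(X)$ directly from a $g$-function on $X$ and then invoke Lemma~\ref{l2}. Since $X$ is quasi-metrizable, Lemma~\ref{l2} furnishes $g\colon\omega\times X\to\tau$ such that $\{g(n,x):n\in\omega\}$ is a base at each $x$ and $y\in g(n+1,x)\Rightarrow g(n+1,y)\subset g(n,x)$; taking $y=x$ shows $g(n+1,x)\subset g(n,x)$, so each pointwise family is decreasing. For a finite set $A=\{a_1,\dots,a_k\}\in\mathcal{F}(X)$ (with the $a_i$ distinct) I would define
$$G(n,A)=\langle g(n,a_1),\dots,g(n,a_k)\rangle\cap\mathcal{F}(X),$$
which does not depend on the chosen enumeration and satisfies $A\in G(n,A)$, because $a_i\in g(n,a_i)$ gives both $A\subset\bigcup_i g(n,a_i)$ and $A\cap g(n,a_i)\neq\emptyset$ for each $i$.

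To check condition (i) of Lemma~\ref{l2}, let $\langle V_1,\dots,V_l\rangle$ be a basic Vietoris neighborhood of $A$, so $A\subset\bigcup_j V_j$ and each $V_j$ meets $A$. Because $\{g(n,a_i):n\in\omega\}$ is a base at $a_i$, I can choose $n$ so large that simultaneously $g(n,a_i)\subset\bigcup_j V_j$ for every $i$, and $g(n,a_i)\subset V_j$ whenever $a_i\in V_j$; only finitely many conditions are imposed, so such an $n$ exists. Then $\bigcup_i g(n,a_i)\subset\bigcup_j V_j$, and every $V_j$, containing some $a_i$, contains the corresponding $g(n,a_i)$, so Lemma~\ref{l3} yields $G(n,A)\subset\langle V_1,\dots,V_l\rangle$. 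Hence $\{G(n,A):n\in\omega\}$ is a local base at $A$.

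For condition (ii), suppose $B=\{b_1,\dots,b_p\}\in G(n+1,A)$, so $B\subset\bigcup_i g(n+1,a_i)$ and $B\cap g(n+1,a_i)\neq\emptyset$ for each $i\leq k$. I must show $G(n+1,B)\subset G(n,A)$, which by Lemma~\ref{l3} amounts to $\bigcup_l g(n+1,b_l)\subset\bigcup_i g(n,a_i)$ together with the requirement that each $g(n,a_i)$ contains some $g(n+1,b_l)$. Both follow from clause (ii) of the $g$-function on $X$: each $b_l$ lies in some $g(n+1,a_i)$, whence $g(n+1,b_l)\subset g(n,a_i)$, giving the first inclusion; and for each $i$ some $b_l\in g(n+1,a_i)$, whence $g(n+1,b_l)\subset g(n,a_i)$, giving the second. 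Therefore $G$ satisfies both clauses of Lemma~\ref{l2}, and $(\mathcal{F}(X),\tau_V)$ is quasi-metrizable.

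The one point that requires care is that the cardinalities of $A$ and $B$ need not agree, so the verification of (ii) must cover arbitrary finite reconfigurations of points inside the balls $g(n+1,a_i)$; the decomposition supplied by Lemma~\ref{l3} into the two containment conditions is exactly what makes this transparent, and the transfer of the local inequality from $X$ to $\mathcal{F}(X)$ rests squarely on clause (ii) of the pointwise $g$-function. I expect this to be the only genuinely delicate step; the base property (i) is a routine finite adjustment.
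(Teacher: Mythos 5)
Your proof is correct and follows essentially the same route as the paper's: both define $G(n,A)$ as the Vietoris box $\langle \cdot,\dots,\cdot\rangle$ of small neighborhoods of the points of $A$ and verify conditions (i) and (ii) of Lemma~\ref{l2} via Lemma~\ref{l3}. The only (inessential) difference is that the paper instantiates these neighborhoods as balls $B(x,n)$ of a compatible quasi-metric, whose triangle inequality yields $y\in B(x,n+1)\Rightarrow B(y,n+1)\subset B(x,n)$, whereas you work directly with the abstract $g$-function supplied by Lemma~\ref{l2}.
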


\begin{proof}
Let $(X, d)$ be a quasi-metrizable space with a quasi-metric $d$,
and let $B(x, n)=\{y: d(x, y)<1/2^n\}$ for any $x\in X$ and each
$n\in\mathbb{N}$. For any $A=\{x_1, ..., x_k\}\in \mathcal{F}(X)$,
define $G(n, A)$ as follows:
$$G(n, A)=\langle B(x_1, n), ..., B(x_k, n)\rangle.$$

We prove $\{G(n, A): k\in\mathbb{N}\}$ satisfies (i) and (ii) in
Lemma ~\ref{l2}.

First, we prove $\{G(n, A): k\in\mathbb{N}\}$ satisfies (i). Let
$\langle U_1, ..., U_m\rangle$ be an open neighborhood of $A$. Since
$A\subset\bigcup_{j\leq m}U_j$ and $A\cap U_j\neq \emptyset$ $
(j\leq m)$, we can choose a finite family of open subsets $\{V_i:
i\leq k\}$ satisfies the following conditions (1)-(4):

\smallskip
(1) for each $i\leq k$, $x_i\in V_i$;

\smallskip
(2) for any distinct $i, j\leq k$, $V_i\cap V_j=\emptyset$;

\smallskip
(3) $\bigcup_{i\leq k}V_i\subset \bigcup_{j\leq m}U_j$;

\smallskip
(4) each $U_j$ contains some $V_i$.

\smallskip
Choose $n'\in\mathbb{N}$ such that $B(x_i, n')\subset V_i$ for each
$i\leq k$. By Lemma~\ref{l3}, we have $$G(n', A)=\langle B(x_1, n'),
..., B(x_k, n')\rangle\subset \langle V_1, ..., V_k\rangle\subset
\langle U_1, ..., U_m\rangle.$$

Finally, we prove $\{G(n, A): k\in\mathbb{N}\}$ satisfies (ii). If
$B\in G(n+1, A)$, then let $B=\{y_1, ..., y_l\}$. For any $j\leq l$,
we have $y_j\in B(x_{i_j}, n+1)$ for some $i_{j}\leq k$, hence
$B(y_j, n+1)\subset B(x_{i_j}, n)$. Since $B\in G(n+1, A)$, each
$B(x_i, n)$ contains some $B(y_j, n+1)$. Therefore, it follows from
Lemma~\ref{l3} that $G(n+1, B)=\langle B(y_1, n+1), ..., B(y_l,
n+1)\rangle \subset \langle B(x_1, n), ..., B(x_k, n)\rangle =G(n,
A)$.
\end{proof}

Next we prove the second main theorems of our paper. First, we give a proposition.

\begin{proposition}\label{p2}
For a space $X$, $(\mathcal{K}(X), \tau_V)$ is homeomorphic to
$(\mathcal{K}(X), \tau_{loc fin})$.
\end{proposition}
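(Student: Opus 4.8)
The plan is to prove something slightly stronger than mere homeomorphism: I will show that the two topologies $\tau_V$ and $\tau_{loc fin}$ coincide as topologies on the common underlying set $\mathcal{K}(X)$, so that the identity map is the required homeomorphism. To compare them I work with subbases. On $CL(X)$ the Vietoris topology has the subbase $\{V^+: V\ \mbox{open}\}\cup\{U^-: U\ \mbox{open}\}$, since $\langle U_1,\ldots,U_k\rangle=(\bigcup_{i\le k}U_i)^+\cap\bigcap_{i\le k}U_i^-$, while conversely $U^-=\langle X,U\rangle$ and $V^+=\langle V\rangle$. By definition the locally finite topology has the subbase $\{V^+: V\ \mbox{open}\}\cup\{\mathcal{U}^-: \mathcal{U}\ \mbox{a locally finite family of open sets}\}$. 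The two subbases share the ``upper'' members $V^+$, so the entire comparison reduces to the ``lower'' members. Since a subbase restricts to a subbase on a subspace, it suffices to compare the traces of these subbasic sets on $\mathcal{K}(X)$.

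One inclusion is immediate and in fact holds on all of $CL(X)$: a single open set $U$ forms the (trivially locally finite) family $\{U\}$, so $U^-=\{U\}^-$ is $\tau_{loc fin}$-subbasic, and hence $\tau_V\subseteq\tau_{loc fin}$. The identity from the finer topology to the coarser one is thus continuous, and I only need the reverse direction on $\mathcal{K}(X)$.

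For the reverse inclusion, which is the substantive step, I must show that for every locally finite family $\mathcal{U}$ of open sets the trace $\mathcal{U}^-\cap\mathcal{K}(X)$ is $\tau_V$-open. The crucial observation is the interaction between compactness and local finiteness: if $K$ is compact and $\mathcal{U}$ is locally finite, then only finitely many members of $\mathcal{U}$ can meet $K$, as one sees by covering $K$ with finitely many open sets each meeting only finitely many members of $\mathcal{U}$. Consequently, if $K\in\mathcal{U}^-$, i.e. $K$ meets every member of $\mathcal{U}$, then $\mathcal{U}$ must itself be finite. I therefore split into two cases. If $\mathcal{U}$ is infinite, then no compact set can meet all of its members, so $\mathcal{U}^-\cap\mathcal{K}(X)=\emptyset$, which is $\tau_V$-open. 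If $\mathcal{U}=\{U_1,\ldots,U_k\}$ is finite, then $\mathcal{U}^-=\bigcap_{i\le k}U_i^-$ is a finite intersection of Vietoris-subbasic sets, hence $\tau_V$-open. In either case $\mathcal{U}^-\cap\mathcal{K}(X)\in\tau_V$, giving $\tau_{loc fin}\subseteq\tau_V$ on $\mathcal{K}(X)$.

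Combining the two inclusions yields $\tau_V=\tau_{loc fin}$ on $\mathcal{K}(X)$, so the identity map is a homeomorphism. The only point requiring any care is the compactness/local-finiteness lemma together with the deduction that $\mathcal{U}$ is finite whenever $\mathcal{U}^-$ meets $\mathcal{K}(X)$; once that is in hand, the remainder is routine subbase bookkeeping, and I expect no genuine obstacle beyond stating that lemma cleanly.
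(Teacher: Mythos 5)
Your proof is correct and rests on exactly the same key observation as the paper's: a compact set can meet only finitely many members of a locally finite family, so any $\tau_{loc fin}$-subbasic (or basic) set containing a compact set is already Vietoris-open, giving $\tau_{loc fin}\subseteq\tau_V$ on $\mathcal{K}(X)$. Your version merely spells out the subbase bookkeeping and the empty-trace case for infinite families more explicitly than the paper does.
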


\begin{proof}
It is known that $\tau_V\subset \tau_{loc fin}$. Fix any $K\in
\mathcal{K}(X)$. If $K\in\mathcal{U}^-\cap \mathcal{U}^+$, where
$\mathcal{U}$ is a locally finite family of open subsets, then $K$
meets every element of $\mathcal{U}$, thus $\mathcal{U}$ must be
finite, which means $\mathcal{U}^-\cap \mathcal{U}^+\in \tau_V$,
hence $\tau_{loc fin}\subset \tau_V$.
\end{proof}

\begin{remark}
If $X$ is a $T_1$-space, then it follows from
Proposition ~\ref{p2} that $X$ is homeomorphic to
$(\mathcal{F}_1(X), \tau_{loc fin})$.
\end{remark}

\begin{theorem}\label{t3}
The following statements are equivalent for a space $X$.
\begin{enumerate}
\item $(CL(X), \tau_{loc fin})$ is metrizable;

\smallskip
\item $(CL(X), \tau_{loc fin})$ is symmetrizable;

\smallskip
\item $X$ is metrizable and $NI(X)$ is compact.
\end{enumerate}
\end{theorem}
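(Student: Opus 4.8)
The plan is to establish the cycle $(1)\Rightarrow(2)\Rightarrow(3)\Rightarrow(1)$. The implication $(1)\Rightarrow(2)$ is immediate, since any metric is in particular a symmetric inducing the same topology, so a metrizable space is symmetrizable. The two genuine directions are $(2)\Rightarrow(3)$ and $(3)\Rightarrow(1)$.

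For $(2)\Rightarrow(3)$, I would first use that symmetrizability is inherited by subspaces (restricting a compatible symmetric to any subset again induces its topology), so that the closed copy $\mathcal{F}_1(X)$ makes $X$ symmetrizable, and more generally any closed subspace of the hyperspace is symmetrizable. The facts about symmetric spaces I intend to lean on are that every symmetrizable space is sequential with countable pseudocharacter, and that in a symmetrizable space every closed countably compact subset is a $G_\delta$. Granting these, it suffices to prove that $NI(X)$ is countably compact and that $X$ is metrizable. To see $NI(X)$ is countably compact I would argue by contradiction: from an infinite closed discrete set $\{a_n:n\in\omega\}$ of non-isolated points, choosing for each $a_n$ a sequence converging to it and assembling the associated closed sets, one produces inside $(CL(X),\tau_{loc fin})$ a closed subspace that fails to be sequential, contradicting symmetrizability. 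Once $NI(X)$ is countably compact, I would pass to the closed subspace of $(CL(X),\tau_{loc fin})$ consisting of the closed subsets contained in $NI(X)$, which is a symmetrizable copy of $(CL(NI(X)),\tau_{loc fin})$; since $NI(X)$ is countably compact, \cite[Theorem 1.2]{NS1988} gives $\tau_V=\tau_{loc fin}$ there, so $(CL(NI(X)),\tau_V)$ is symmetrizable and its closed countably compact subsets are $G_\delta$. The Remark following Lemma~\ref{l4} then yields that $NI(X)$ is compact and metrizable.

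It then remains to upgrade \emph{$NI(X)$ compact metrizable} to \emph{$X$ metrizable}, which I would do exactly as at the end of the proof of Theorem~\ref{t1}, via \cite[Theorem 7]{DL1995} (after noting that $I(X)$ is an open discrete subspace while $NI(X)$ is compact metrizable). For $(3)\Rightarrow(1)$, assume $X$ is metrizable with $NI(X)$ compact. The governing idea is that compactness of $NI(X)$ tames the locally finite topology: any locally finite family of open sets can meet the compact set $NI(X)$ in only finitely many members, so the ``infinitary'' part of $\tau_{loc fin}$ is concentrated on the open discrete set $I(X)$, where it is easy to control, while on the compact part Proposition~\ref{p2} gives $\tau_V=\tau_{loc fin}$ and $\mathcal{K}(NI(X))$ is compact metrizable. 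Combining a $\sigma$-discrete description of the discrete part with the compact metrizable part, I would exhibit a $\sigma$-locally finite base for $(CL(X),\tau_{loc fin})$ and conclude metrizability by the Nagata--Smirnov theorem, after checking regularity.

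The main obstacle I anticipate is the step inside $(2)\Rightarrow(3)$ that forces $NI(X)$ to be countably compact. Symmetrizable spaces need not be first-countable (e.g.\ the sequential fan), so one cannot simply quote a first-countability characterization of the hyperspace as in Theorem~\ref{t1}; the only leverage is sequentiality together with countable pseudocharacter. The delicate point is therefore to convert an infinite closed discrete subset of $NI(X)$ into an explicit non-sequential closed subspace of $(CL(X),\tau_{loc fin})$, and to verify carefully both that the constructed configuration is genuinely closed in the locally finite topology and that it genuinely fails sequentiality. Getting this obstruction right is where the real work lies.
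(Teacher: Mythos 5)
Your cycle $(1)\Rightarrow(2)\Rightarrow(3)\Rightarrow(1)$ matches the paper's, but the substantive direction $(2)\Rightarrow(3)$ contains genuine gaps, and they sit exactly at the load-bearing points. (i) The step that forces $NI(X)$ to be countably compact is never actually performed: you describe the intended contradiction (assemble a closed, non-sequential subspace of $(CL(X),\tau_{loc fin})$ from an infinite closed discrete subset of $NI(X)$ and attached convergent sequences) and then yourself concede that carrying it out ``is where the real work lies.'' That construction is the heart of the matter and is absent. The paper avoids it entirely: it shows each $A\in CL(X)$ has a countable weak base, deduces that the sets $\mathrm{int}(\bigcup\mathcal{W}_n)$ form a countable neighborhood base at $A$, i.e.\ that $X$ is a $D_1$-space, and only then invokes \cite[Theorem 1]{DL1995} to get countable compactness of $NI(X)$. (ii) Your appeal to the Remark after Lemma~\ref{l4} rests on the assertion that in a symmetrizable space every closed countably compact subset is a $G_\delta$; you cite no source and give no proof, and it does not follow from the two facts you do list (sequentiality and countable pseudocharacter). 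This particular gap is repairable---once $NI(X)$ is countably compact you have $\tau_V=\tau_{loc fin}$ on $CL(NI(X))$, and \cite{LL2022} (quoted in the introduction) already gives that $(CL(Y),\tau_V)$ is symmetrizable iff $Y$ is compact metrizable---but as written the step is unjustified. (iii) Most seriously, the final upgrade to ``$X$ is metrizable'' does not follow from what you have. In Theorem~\ref{t1}, \cite[Theorem 7]{DL1995} is applied only after $X$ is known to be a $D_1$-space; your parenthetical ``$I(X)$ is open discrete and $NI(X)$ is compact metrizable'' cannot be the full hypothesis of that theorem (the one-point compactification of an uncountable discrete space satisfies both conditions and is not metrizable). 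Your route never establishes first countability, semi-metrizability, or the $D_1$ property of $X$---precisely the properties the paper spends most of its proof extracting from symmetrizability of the hyperspace, via the countable-tightness argument with the sets $\overline{V}$ and the weak-base argument at each $A\in CL(X)$.

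Two lesser points. Your justification that $X$ itself is symmetrizable invokes heredity of symmetrizability to arbitrary subspaces; that is false in general, though it is true for closed subspaces, which is all you actually need ($\mathcal{F}_1(X)$ and $\{A\in CL(X): A\subset NI(X)\}$ are closed in the hyperspace). And for $(3)\Rightarrow(1)$ the paper simply cites \cite[Theorem 2.3]{BHPV1987}, whereas you propose to reprove that result by exhibiting a $\sigma$-locally finite base and applying Nagata--Smirnov; the idea is plausible, but none of it is carried out, so this direction too remains a sketch rather than a proof.
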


\begin{proof}
The implications (1) $\Rightarrow$ (2) is trivial. By \cite[Theorem
2.3]{BHPV1987}, we have (3) $\Rightarrow$ (1). Now it suffices to prove (2)
$\Rightarrow$  (3).

(2) $\Rightarrow$ (3). By Proposition  ~\ref{p2}, $X$ is
symmetrizable. Fix $x\in X$. Clearly, we have $\{x\}\in
Cl(\{\overline{V}: V$ is an open neighborhood of $x\ \mbox{in}\
X\})$; Since $(CL(X), \tau_{loc fin})$ has countable tightness,
there is a countable subfamily $\{V_n: n\in\mathbb{N}\}\subset \{V:
V$ is an open neighborhood of $x\ \mbox{in}\ X\}$ such that
$\{x\}\in Cl(\{\overline{V_n}: n\in\mathbb{N}\})$. We claim that
$\{V_n: n\in\mathbb{N}\}$ is a countable local base of $x$. Indeed,
take any open neighborhood $U$ of $x$ in $X$, since $U^+$ is an open
neighborhood of $\{x\}$ in $(CL(X), \tau_{loc fin})$ and $\{x\}\in
Cl(\{\overline{V_n}: n\in\mathbb{N}\})$, there exists
$n\in\mathbb{N}$ such that $V_n\subset \overline{V_n}\in U^+$, which
implies that $V_n\subset U$. Hence $X$ is a semi-metrizable space
\cite[Theorem 9.6]{G1984}. Next it suffices to prove that $X$ is
$D_1$-space. Indeed, since $X$ is a $D_{1}$-space, then it follows
from \cite[Theorem 1]{DL1995} that $NI(X)$ is countably compact.
From the semi-metrizability of $X$, it follows from \cite[Theorem
9.8]{G1984} that $X$ is semi-stratifiable, then $NI(X)$ is
metrizable \cite[Theorem 5.11, Theorem 2.14]{G1984}. Hence $X$ is
metrizable by \cite[Theorem 7]{DL1995}.

Fix any $A\in CL(X)$. Since $(CL(X), \tau_{loc fin})$ is
symmetrizable, we may assume that $\{\mathcal{W}_n: n\in
\mathbb{N}\}$ is a countable weak base at $A$ in $(CL(X), \tau_{loc
fin})$, then $\bigcup\mathcal{W}_n$ is a sequential neighborhood of
$A$ in $X$ for each $n\in\mathbb{N}$. Indeed, for any sequence
$\{x_k: k\in \mathbb{N}\}$ with $x_k\to x\in A$ in $X$, we have
$A\cup\{x_k\}\to A$ in $(CL(X), \tau_{loc fin})$, then there is
$m\in \mathbb{N}$ such that $\{A\cup\{x_k\}: k\geq m\}\subset
\mathcal{W}_n$, hence $\{x_k: k\geq m\}\subset
\bigcup\mathcal{W}_n$. Therefore,
$\{\mbox{int}(\bigcup\mathcal{U}_n): n\in\mathbb{N}\}$ is a
countable base of $A$. Therefore $X$ is a $D_1$-space.
\end{proof}

\begin{theorem}\label{t4}
For a space $X$, $(CL(X), \tau_{loc fin})$ is first-countable if and
only if $X$ is a $D_1$-space and $NI(X)$ is hereditarily separable.
\end{theorem}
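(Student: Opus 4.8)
The plan is to analyze the two kinds of subbasic neighborhoods separately. A basic $\tau_{locfin}$-neighborhood of a closed set $A$ has the form $V^{+}\cap\mathcal{U}^{-}$, where $V$ is open with $A\subseteq V$ and $\mathcal{U}$ is a locally finite family of open sets each meeting $A$ (note $V_{1}^{+}\cap V_{2}^{+}=(V_{1}\cap V_{2})^{+}$ and $\mathcal{U}_{1}^{-}\cap\mathcal{U}_{2}^{-}=(\mathcal{U}_{1}\cup\mathcal{U}_{2})^{-}$, and a finite union of locally finite families is locally finite). Hence $(CL(X),\tau_{locfin})$ is first countable at $A$ if and only if the ``upper'' neighborhoods $\{V^{+}:A\subseteq V\}$ and the ``lower'' neighborhoods $\{\mathcal{U}^{-}\}$ each admit a countable cofinal subfamily; I will show that the first condition, ranging over all closed $A$, is equivalent to $X$ being a $D_{1}$-space, and the second to $NI(X)$ being hereditarily separable. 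The final step, combining a countable upper base $\{V_{n}^{+}\}$ and a countable lower base $\{\mathcal{U}_{m}^{-}\}$ into the countable base $\{V_{n}^{+}\cap\mathcal{U}_{m}^{-}\}$ at $A$, is then immediate.

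For the upper part, first I would note that if $\{V_{n}^{+}\cap\mathcal{U}_{n}^{-}\}$ is a countable base at $A$ then $\{V_{n}\}$ is a countable neighborhood base of $A$ in $X$: given open $W\supseteq A$, choose $n$ with $V_{n}^{+}\cap\mathcal{U}_{n}^{-}\subseteq W^{+}$; for any $x\in V_{n}$ the set $A\cup\{x\}$ is closed (as $X$ is $T_{2}$, hence $T_{1}$), it lies in $V_{n}^{+}\cap\mathcal{U}_{n}^{-}$ since $A$ already meets every member of $\mathcal{U}_{n}$, so $A\cup\{x\}\subseteq W$ and thus $V_{n}\subseteq W$. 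As $A$ was arbitrary this gives $D_{1}$; conversely a countable neighborhood base $\{V_{n}\}$ of $A$ in $X$ yields the cofinal family $\{V_{n}^{+}\}$.

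The lower part is the heart of the matter, and the key tool is a selection lemma: if $\mathcal{U}$ is locally finite and one picks $a_{U}\in U$ for each $U\in\mathcal{U}$, then $\{a_{U}\}$ is a closed discrete subspace of $X$, and by local finiteness each point lies in only finitely many members of $\mathcal{U}$. For sufficiency I would use that hereditary separability of $NI(X)$ forces every closed discrete subset lying in $A_{N}:=A\cap NI(X)$ to be countable, whence every locally finite family whose members meet $A_{N}$ is countable; combined with a fixed countable dense subset of $A_{N}$ and the first countability of $X$ (which follows from $D_{1}$), this lets me write down a countable cofinal family of lower neighborhoods, the relation $\mathcal{U}_{m}^{-}\subseteq\mathcal{U}^{-}$ being guaranteed whenever every $U\in\mathcal{U}$ contains a member of $\mathcal{U}_{m}$ that still meets $A$. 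The members of $\mathcal{U}$ meeting $A$ only at isolated points are treated separately: those forming a closed discrete set are captured by the single locally finite family $\{\{p\}:p\in A\cap I(X)\}$, while isolated points clustering at a point of $A_{N}$ are captured one neighborhood-base step at a time. For necessity of hereditary separability I would argue by contraposition, using the characterization that a non-hereditarily-separable space contains an uncountable left-separated subspace $\{y_{\alpha}:\alpha<\omega_{1}\}\subseteq NI(X)$ with open $W_{\alpha}\ni y_{\alpha}$ satisfying $W_{\alpha}\cap\{y_{\beta}:\beta>\alpha\}=\emptyset$; taking $A=\overline{\{y_{\alpha}\}}$, each $W_{\alpha}^{-}$ is a lower neighborhood of $A$, a pressing-down argument fixes one $n$ with $\mathcal{U}_{n}^{-}\cap V_{n}^{+}\subseteq W_{\alpha}^{-}$ for uncountably many $\alpha$, and then the selection lemma forces, for each such $\alpha$, a member $U_{\alpha}\in\mathcal{U}_{n}$ with $\emptyset\neq U_{\alpha}\cap A\subseteq W_{\alpha}$.

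The step I expect to be the main obstacle is precisely this last necessity argument. The ``lower'' witnesses $U_{\alpha}\cap A$ live in the closure $\overline{\{y_{\alpha}\}}$ rather than in the left-separated set itself, so the clash with left-separation must be engineered carefully (for instance by shrinking the $W_{\alpha}$ and thinning the $y_{\alpha}$ to a free subfamily before passing to the closure, then running a $\Delta$-system or counting argument on $\alpha\mapsto U_{\alpha}$), and one must simultaneously prevent the isolated points of $X$ from interfering with the estimate. By comparison, the upper/$D_{1}$ equivalence and the assembly of the two cofinal families into a single countable base are routine.
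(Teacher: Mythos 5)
Your reduction of the ``upper'' part to the $D_1$ property is correct and essentially what the paper does, but the rest of the proposal has a genuine gap, and it sits exactly where you predict. The missing idea is a structural fact you never obtain: the $D_1$ property already forces $NI(X)$ to be \emph{countably compact} (the paper quotes this from \cite[Theorem 1]{DL1995}). With that fact, necessity of hereditary separability is almost immediate: if $A$ is closed in $NI(X)$ and $\{\mathcal{U}_n^-\cap\mathcal{U}_n^+ : n\in\mathbb{N}\}$ is a countable base at $A$, then each $\mathcal{U}_n$ is a locally finite family all of whose members meet the countably compact set $A$, hence each $\mathcal{U}_n$ is \emph{finite}; choosing one point of $A$ (or of any prescribed dense subset of $A$) in each member yields a countable dense set. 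Your alternative route --- an $\omega_1$-sized left-separated subspace plus a pressing-down argument --- stalls at precisely the step you flag: after extracting $U_\alpha\in\mathcal{U}_n$ with $\emptyset\neq U_\alpha\cap A\subseteq W_\alpha$, neither horn closes. If uncountably many $U_\alpha$ are distinct, a selection gives an uncountable closed discrete subset of $A$, which is no contradiction \emph{unless} you already know $NI(X)$ is countably compact (the very fact you lack); if a single $U$ repeats uncountably often, you get a point lying in uncountably many $W_\alpha$, which is perfectly consistent with left-separation. So the necessity direction is not proved.

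The sufficiency half has the same missing ingredient. Hereditary separability only tells you that the members of a locally finite $\mathcal{U}$ meeting $A_N=A\cap NI(X)$ are \emph{countably} many, and a pre-assigned countable collection of lower neighborhoods cannot dominate infinitely many such members at once: the base the paper constructs is indexed by $(n,m)$ together with \emph{finite} subsets of a countable family $\mathcal{V}$ of basic neighborhoods of a countable dense subset of $C=A\cap NI(X)$, and finiteness is available only because countable compactness of $C$ reduces the members of $\mathcal{P}$ meeting $C$ to finitely many. Your treatment of isolated points is also not yet an argument: $\{\{p\}:p\in A\cap I(X)\}$ is in general \emph{not} locally finite (isolated points of $X$ may cluster at points of $C$), and ``one neighborhood-base step at a time'' is, in the paper, the concrete decomposition $E_n=(U_n\setminus U_{n+1})\cap A\cap I(X)$ into closed discrete pieces along a decreasing neighborhood base $\{U_n\}$ of $C$, followed by a selection argument showing that all members of $\mathcal{P}$ missing $C$ already meet $\bigcup_{i\le n_0}E_i$ for a single $n_0$. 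In short, your skeleton (upper part $\leftrightarrow$ $D_1$, lower part $\leftrightarrow$ hereditary separability of $NI(X)$) agrees with the paper, but both nontrivial implications remain open in your write-up, and the one fact that closes them, $D_1\Rightarrow NI(X)$ countably compact, is absent.
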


\begin{proof}
Suppose $(CL(X), \tau_{loc fin})$ is first-countable, then it follows from the
proof of (2) $\Rightarrow$ (4) of Theorem ~\ref{t3} that $X$ is a
$D_1$-space, hence $NI(X)$ is countably compact \cite[Theorem
1]{DL1995}. Let $A$ be a closed subset $NI(X)$, and let
$\{\mathcal{U}^-_n\cap \mathcal{U}^+_n: n\in\mathbb{N}\}$ be a
countable base of $A$, where each $\mathcal{U}_n$ is locally finite.
Since every element of $\mathcal{U}_n$ meets $A$ and $A$ is
countably compact, each $\mathcal{U}_n$ is finite. Hence let
$\mathcal{U}_n=\{U_{n, i}: i\leq m_n\}$ for each $n\in\mathbb{N}$.
Pick $x_{n, i}\in A\cap U_{n, i}$ for any $n\in\mathbb{N}, i\leq
m_{n}$, and then put $E=\{x_{n, i}: n\in\mathbb{N}, i\leq m_n\}$. It
is straightforward to prove that $E$ is a countable dense subset of
$A$. Therefore, $NI(X)$ is hereditarily separable.

Let $X$ be a $D_1$-space and $NI(X)$ is hereditarily separable. Fix
any $A\in CL(X)$. Let $\{U_n: n\in\mathbb{N}\}$ be a countable
strictly decreasing base of $C=A\cap NI(X)$ with $U_1=X$, and let
$E_n=(U_n\setminus U_{n+1})\cap (A\cap I(X))$ for $n\in\mathbb{N}$;
then each $E_n$ is discrete in $X$ and $A\cap I(X)=\bigcup_{n}E_n$.
Clearly, $I(X)\cap A=\bigcup_{n} (A\cap E_n)$ and each $E_{n}$ is
countable, hence we enumerate each $E_n$ as $\{x(n, \alpha(n)):
\alpha(n)\in \Gamma_n\}$, where each $|\Gamma_n|\leq\omega$. Let
$D=\{d_i: i\in \mathbb{N}\}$ be a countable dense subset of $C$, and
let $\mathcal{V}_i$ be a countable base of $d_i$ for each
$i\in\mathbb{N}$. Then $\mathcal{V}=\bigcup_n\mathcal{V}_n$ is a
countable family. For any $n, m\in\mathbb{N}$ and $\{W_1, ...,
W_r\}\in\mathcal{V}^{<\omega}$, put $$\mathcal{V}_{n, m, \{W_1, ...,
W_r\}}=\{\{x(i, \alpha(i))\}: \alpha(i)\in \Gamma_i, i\leq n,
\}\cup\{U_{m}, A\cap I(X)\}\cup\{W_1, ..., W_r\}.$$ Then put
$$\mathcal{B}=\{\mathcal{V}_{n, m, \{W_1, ..., W_r\}}^{+}\cap
\mathcal{V}_{n, m, \{W_1, ..., W_r\}}^{-}: n, m\in\mathbb{N}\
\mbox{and}\ \{W_1, ..., W_r\}\in\mathcal{V}^{<\omega}\}.$$  Clearly,
each family $\mathcal{V}_{n, m, \{W_1, ..., W_r\}}$ is locally
finite which consists of open subsets of $X$.

\smallskip
{\bf Claim:} $\mathcal{B}$ is a local base of $A$.

\smallskip
Let $\mathcal{P}^-\cap\mathcal{P}^+$ be an arbitrary open
neighborhood of $A$ in $(CL(X), \tau_{loc fin})$, where
$\mathcal{P}$ is a locally finite family of open subsets. Since $C$
is countably compact, there is a finite subfamily
$\mathcal{P}'=\{P_i: i\leq k\}$ of $\mathcal{P}$ such that each
element of $\mathcal{P}'$ meets $C$, $C\subset\bigcup_{i\leq k}P_i$
and $P\cap C=\emptyset$ for any $P\in
\mathcal{P}\setminus\mathcal{P}'$, there exists $m_{0}\in\mathbb{N}$
such that $U_{m_{0}}\subset \bigcup_{i\leq k}P_i$.  For each $i\leq
k$, $P_i$ contains some $d_{n_{i}}$, then pick
$W_i\in\mathcal{V}_{n_{i}}$ such that $d_{n_{i}}\in W_i\subset P_i$,
then $\{W_1, ..., W_k\}\in \mathcal{V}^{<\omega}$. Since
$A\in\mathcal{P}^-\cap\mathcal{P}^+$ and $P\cap C=\emptyset$ for any
$P\in \mathcal{P}\setminus\mathcal{P}'$, it follows that each $P\in
\mathcal{P}\setminus\mathcal{P}'$ meets some $E_{n}$. Indeed, we
claim that there exists $n_{0}\in \mathbb{N}$ such that each $P\in
\mathcal{P}\setminus \mathcal{P}'$ meets some $E_i$ for some $i\leq
n_{0}$. Suppose not, there exists a countable family of $\{P_l: l\in
\mathbb{N}\}\subset \mathcal{P}\setminus \mathcal{P}'$ such that
$E_{n_l}\cap P_l\neq \emptyset$ for each $l\in \mathbb{N}$. Pick any
$x_l\in E_{n_l}\cap P_l$ for each $l\in\mathbb{N}$; then $\{x_l:
l\in\mathbb{N}\}$ has a cluster point since $x_l\in U_{n_l}$. On the
other hand, $x_l\in P_l$ for each $l\in\mathbb{N}$ and $\{P_l:
l\in\mathbb{N}\}$ is locally finite, hence $\{x_l: l\in\mathbb{N}\}$
is discrete, this is a contradiction. Then $A\in\mathcal{V}_{n_{0},
m_{0}, \{W_1, ..., W_r\}}^{+}\cap \mathcal{V}_{n_{0}, m_{0}, \{W_1,
..., W_r\}}^{-}\subset \mathcal{P}^-\cap\mathcal{P}^+$ by
\cite[Lemma 1.1]{NS1988}.
\end{proof}

By Theorems~\ref{t3},~\ref{t4} and \cite[Theorem 7]{DL1995} and
Remark of Lemma ~\ref{l4}, the following corollary is easily
verified.

\begin{corollary}
The following statements are equivalent for a space $X$.
\begin{enumerate}
\item $(CL(X), \tau_{loc fin})$ is metrizable;

\smallskip
\item $(CL(X), \tau_{loc fin})$ has a point-countable base;

\smallskip
\item $(CL(X), \tau_{loc fin})$ is quasi-metrizable;

\smallskip
\item each closed countably compact subset of $(CL(X), \tau_{loc fin})$ has a countable base;

\smallskip
\item $X$ is metrizable and $NI(X)$ is compact.
\end{enumerate}
\end{corollary}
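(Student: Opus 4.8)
The plan is to make conditions (1) and (5) the hub: Theorem~\ref{t3} already gives $(1)\Leftrightarrow(5)$, so it suffices to arrange
\[(5)\Rightarrow(1)\Rightarrow(2)\Rightarrow(4)\Rightarrow(5)\quad\text{together with}\quad (1)\Rightarrow(3)\Rightarrow(4).\]
The two arrows out of (1) are immediate: a metrizable space carries a $\sigma$-discrete, hence point-countable, base, which is (2), and every metric is a quasi-metric, which is (3). Thus the whole statement reduces to the two feeder arrows $(2)\Rightarrow(4)$, $(3)\Rightarrow(4)$ and to the one substantial implication $(4)\Rightarrow(5)$.

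For $(2)\Rightarrow(4)$ I would use that a point-countable base produces a $G_\delta$-diagonal, a property inherited by every subspace; hence each closed countably compact subset of $(CL(X),\tau_{loc fin})$ has a $G_\delta$-diagonal and is therefore compact metrizable by \cite[Theorem 2.14]{G1984}, so in particular it has a countable base. The arrow $(3)\Rightarrow(4)$ is the locally finite counterpart of the step $(2)\Rightarrow(3)$ in Theorem~\ref{t1}: a quasi-metrizable space is a $\gamma$-space, and by \cite[Corollary 10.8(ii), Theorem 10.6(iii)]{G1984} each closed countably compact subset of a $\gamma$-space has a countable base.

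The core is $(4)\Rightarrow(5)$. Once I know that $(CL(X),\tau_{loc fin})$ is first-countable (see the obstacle below), Theorem~\ref{t4} makes $X$ a $D_1$-space, whence $NI(X)$ is countably compact by \cite[Theorem 1]{DL1995}. Now $\mathcal{F}_1(NI(X))$ is a closed copy of $NI(X)$ inside $(CL(X),\tau_{loc fin})$ and, being homeomorphic to the countably compact space $NI(X)$, it is a closed countably compact subset; by (4) it has a countable base, so $NI(X)$ is second countable, and second countable plus countably compact gives that $NI(X)$ is compact metrizable. (Equivalently, one may pass to the closed subspace $(CL(NI(X)),\tau_{loc fin})$, note that $\tau_{loc fin}=\tau_V$ there because $NI(X)$ is countably compact, and invoke the Remark following Lemma~\ref{l4}.) Finally \cite[Theorem 7]{DL1995}, applied to the $D_1$-space $X$ with $NI(X)$ metrizable, yields that $X$ is metrizable; together with the compactness of $NI(X)$ this is exactly (5), and $(5)\Rightarrow(1)$ closes the circle.

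The step I expect to be the main obstacle is deriving first-countability of $(CL(X),\tau_{loc fin})$ from the a priori weaker hypothesis (4); this is the locally finite analogue of the opening sentence of $(3)\Rightarrow(4)$ in Theorem~\ref{t1}, where first-countability of $(CL(X),\tau_V)$ is extracted via \cite[Theorem 5.15]{LL2022}. For an arbitrary space the mere fact that closed countably compact subsets have countable bases does not force first-countability, so the argument must exploit the hyperspace structure---above all the closed copy $\mathcal{F}_1(X)\cong X$ and the interplay of the subbasic sets $V^+$ and $\mathcal{U}^-$. The remaining points, namely that $\mathcal{F}_1(NI(X))$ is closed and countably compact and the routine ``countable base $\Rightarrow$ second countable'' bookkeeping, present no difficulty.
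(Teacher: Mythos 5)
Your overall skeleton---$(1)\Leftrightarrow(5)$ from Theorem~\ref{t3}, the remaining arrows funneled through Theorem~\ref{t4}, \cite[Theorems 1 and 7]{DL1995} and the Remark following Lemma~\ref{l4}---is exactly the route the paper intends, but two of your steps have genuine gaps. The first concerns the implication $(4)\Rightarrow(5)$, where the step you flag as ``the main obstacle'' and then leave unresolved is in fact a one-liner, so your proof of the only substantial implication is incomplete as written. In this paper, ``has a countable base'' for a \emph{closed set} means a countable local (outer) base, exactly as in the definition of a $D_1$-space and as condition (3) of Theorem~\ref{t1} is used there (its proof opens with ``Since $(CL(X),\tau_V)$ is first-countable\dots''). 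Now $\tau_{loc fin}\supseteq\tau_V$ and the Vietoris topology on $CL(X)$ is $T_1$, so every singleton $\{A\}$ is a closed and trivially countably compact subset of $(CL(X),\tau_{loc fin})$; hypothesis (4) applied to $\{A\}$ \emph{is} a countable neighbourhood base at $A$. Thus (4) forces first countability outright, contrary to your claim that it does not. Separately, your main route's inference ``$\mathcal{F}_1(NI(X))$ has a countable base, so $NI(X)$ is second countable'' conflates an outer base of a set with a base of the subspace topology; what actually works is your parenthetical alternative, which is the paper's argument: $\{A\in CL(X): A\subseteq NI(X)\}$ is closed in $(CL(X),\tau_{loc fin})$ (its complement is $(I(X))^-$), so by (4) its closed countably compact subsets have countable outer bases, hence are $G_\delta$-sets (using $T_1$), and the Remark following Lemma~\ref{l4} applied to the countably compact space $NI(X)$ gives $NI(X)$ compact metrizable; then \cite[Theorem 7]{DL1995} yields (5).

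The second gap is in $(2)\Rightarrow(4)$: you rest it on the assertion that a point-countable base produces a $G_\delta$-diagonal. That is not a citable fact---no such implication appears in \cite{G1984}, and it is not a valid implication for general spaces---so this arrow is unproven as written. The classical tool here is Mi\v{s}\v{c}enko's theorem: a countably compact Hausdorff space with a point-countable base is compact and metrizable (a point-countable base makes the space metaLindel\"{o}f, countably compact plus metaLindel\"{o}f gives compact, and compact with a point-countable base gives metrizable); moreover Mi\v{s}\v{c}enko's lemma (a point-countable family admits only countably many irreducible finite subcovers of a given compact set) supplies the countable \emph{outer} base that (4) actually requires. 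Alternatively, and more in the spirit of the paper, route (2) directly to (5): a point-countable base gives first countability, Theorem~\ref{t4} gives that $X$ is a $D_1$-space, \cite[Theorem 1]{DL1995} gives $NI(X)$ countably compact, the closed copy $(\mathcal{F}_1(NI(X)),\tau_{loc fin})\cong NI(X)$ (Remark after Proposition~\ref{p2}) inherits the point-countable base, Mi\v{s}\v{c}enko gives $NI(X)$ compact metrizable, and \cite[Theorem 7]{DL1995} finishes. With these two repairs your decomposition does prove the corollary, and it is then essentially the paper's own proof.
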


A family $\mathcal{B}$ of open subsets is called an {\it external $\pi$-base} of a subset $A$ of $X$, if for any open subset $U$ with $U\cap A\neq \emptyset$, there is a $B\in\mathcal{B}$ such that $B\cap A\neq \emptyset$ and $B\cap A\subset U$.
\begin{proposition}
For a space $X$, $(CL(X), \tau_{loc fin})$ has a point-$G_\delta$-property if and only if $X$ is perfect and each closed subset of $X$ has a $\sigma$-locally finite external $\pi$-base.

\end{proposition}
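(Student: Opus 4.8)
The plan is to read the point-$G_\delta$-property as the assertion that for every $A\in CL(X)$ the singleton $\{A\}$ is a $G_\delta$-subset of $(CL(X),\tau_{loc fin})$, and to establish the biconditional \emph{pointwise}: for a fixed $A\in CL(X)$, the set $\{A\}$ is $G_\delta$ in the hyperspace if and only if $A$ is a $G_\delta$-subset of $X$ and $A$ admits a $\sigma$-locally finite external $\pi$-base. Quantifying over all nonempty closed $A$ then gives precisely that $X$ is perfect and each closed subset of $X$ has such a base. Throughout I would use that, after combining finitely many subbasic sets, every basic neighborhood of $A$ can be written as $V^{+}\cap\mathcal{W}^{-}$, where $V$ is open, $\mathcal{W}$ is a locally finite family of open sets, $A\subset V$, and $A\cap W\neq\emptyset$ for every $W\in\mathcal{W}$ (recall $V_1^{+}\cap\cdots\cap V_k^{+}=(\bigcap_i V_i)^{+}$, that a finite union of locally finite families is locally finite, and $\mathcal{U}_1^{-}\cap\cdots\cap\mathcal{U}_j^{-}=(\bigcup_i\mathcal{U}_i)^{-}$).

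For the forward direction, write $\{A\}=\bigcap_n(V_n^{+}\cap\mathcal{W}_n^{-})$ with each factor a basic neighborhood of $A$. The ``upper'' parts recover the $G_\delta$-ness of $A$ in $X$: I would show $A=\bigcap_n V_n$, since $A\subset\bigcap_n V_n$, and if some $x\in(\bigcap_n V_n)\setminus A$ existed, then (using $T_2$, so that $A\cup\{x\}\in CL(X)$) the closed set $A\cup\{x\}$ would lie in $V_n^{+}\cap\mathcal{W}_n^{-}$ for every $n$ and hence equal $A$, a contradiction. The ``lower'' parts yield the external $\pi$-base: I would verify that $\mathcal{B}:=\{X\}\cup\bigcup_n\mathcal{W}_n$, which is $\sigma$-locally finite, is an external $\pi$-base of $A$. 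Given an open $U$ with $U\cap A\neq\emptyset$, if $A\subset U$ then $X\in\mathcal{B}$ already works; otherwise $A'=A\setminus U$ is a nonempty proper closed subset of $A$, so $A'\neq A$ and $A'\notin V_n^{+}\cap\mathcal{W}_n^{-}$ for some $n$. As $A'\subset A\subset V_n$ forces $A'\in V_n^{+}$, there is $W\in\mathcal{W}_n$ with $W\cap A'=\emptyset$, and this $W$ satisfies $W\cap A\neq\emptyset$ and $W\cap A\subset U$, as required.

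For the converse, fix $A$, use perfectness to write $A=\bigcap_m W_m$ with $W_m$ open, and let $\bigcup_n\mathcal{B}_n$ be a $\sigma$-locally finite external $\pi$-base of $A$ with each $\mathcal{B}_n$ locally finite. Putting $\mathcal{B}_n'=\{B\in\mathcal{B}_n:B\cap A\neq\emptyset\}$ (still locally finite, with $A\in(\mathcal{B}_n')^{-}$), I would take $O=\bigcap_m W_m^{+}\cap\bigcap_n(\mathcal{B}_n')^{-}$, a $G_\delta$-subset of the hyperspace containing $A$. The upper factors force any $C\in O$ to satisfy $C\subset\bigcap_m W_m=A$, and the lower factors force $C=A$: were $C\subsetneq A$, choose $x\in A\setminus C$ and an open $U\ni x$ with $U\cap C=\emptyset$; the external $\pi$-base yields $B$ with $B\cap A\neq\emptyset$ and $B\cap A\subset U$, so $B\in\mathcal{B}_n'$ for some $n$, yet $C\cap B=C\cap(A\cap B)\subset C\cap U=\emptyset$ contradicts $C\in(\mathcal{B}_n')^{-}$. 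Hence $O=\{A\}$.

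The main obstacle I anticipate is the exact matching between the ``lower'' subbasic parts $\mathcal{W}^{-}$ and the external $\pi$-base condition: the two separation arguments (the closed set $A\setminus U$ in the forward direction, and the open $U$ around $x\in A\setminus C$ in the converse) are the crux, and both rest on the elementary identity $C\cap B=C\cap(A\cap B)$, valid precisely because $C\subset A$. Minor care is also needed for the degenerate cases, namely a singleton $A$ (for which the lower families $\mathcal{W}_n$ may all be empty) and the case $A\subset U$; I would dispose of both by always keeping $X$ in the external $\pi$-base, an addition that never destroys the defining property.
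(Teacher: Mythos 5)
Your proof is correct and takes essentially the same route as the paper's: the upper (``$+$'') parts of a countable family of basic neighborhoods yield $A=\bigcap_n V_n$ and hence perfectness, the lower (``$-$'') parts yield the $\sigma$-locally finite external $\pi$-base, and conversely the perfect $G_\delta$-representation combined with the $\pi$-base pins down $\{A\}$. In fact you are slightly more careful than the paper: your restriction to $\mathcal{B}_n'=\{B\in\mathcal{B}_n: B\cap A\neq\emptyset\}$ repairs a small oversight in the paper's sufficiency argument (the paper's claim that $\{A\}\in\mathcal{W}_n$ is not automatic, since elements of an external $\pi$-base need not meet $A$), and your insertion of $X$ into the $\pi$-base handles the degenerate cases the paper leaves implicit.
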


\begin{proof}
Necessity. Fix $A\in CL(X)$; then $\{A\}=\cap_{n\in\mathbb{N}}(\mathcal{U}^-_n\cap \mathcal{U}^+_n)$, where each $\mathcal{U}_n$ is a locally finite family consisting open subsets of $X$. Let $U_n=\bigcup\mathcal{U}_n$, then it is straightforward to prove that $A=\cap_{n\in\mathbb{N}}U_n$, hence $X$ is perfect. Let $\mathcal{U}=\bigcup_{n\in\mathbb{N}}\mathcal{U}_n$. Then $\mathcal{U}$ is a $\sigma$-locally finite external $\pi$-base of $A$. Indeed, let $V$ be an open subset of $X$ with $V\cap A\neq \emptyset$, $C=A\setminus V$, $C\neq A$. If for $U\in \mathcal{U}$, $U\cap A$ is not contained in $V$, then $U\cap C\neq \emptyset$ since $A\cap U\neq\emptyset$. Then $\{C\}\in \mathcal{U}^-_n\cap\mathcal{U}^+_n$ for all $n\in\mathbb{N}$, this is a contradiction. Hence $\mathcal{U}$ is an external $\pi$-base.

Sufficiency. Fix $A\in CL(X)$, and let $\mathcal{U}=\bigcup_{n\in\mathbb{N}}\mathcal{U}_n$ be a $\sigma$-locally finite external $\pi$-base, where $\mathcal{U}_n=\{U_\alpha: \alpha\in \Gamma_n\}$. Since $X$ is perfect, there is a countable open family $\{V_n: n\in\mathbb{N}\}$ such that $A=\bigcap_{n\in\mathbb{N}}V_n$. Let $\mathcal{U}_n\cap V_n=\{U_\alpha\cap V_n: U_\alpha\in \mathcal{U}_n\}$ and  $\mathcal{W}_n=(\mathcal{U}_n\cap V_n)^-\cap V^+$. Then $\{A\}=\bigcap_{n\in\mathbb{N}}\mathcal{W}_n$. Indeed, it is obvious that $\{A\}\in \mathcal{W}_n$ for each $n\in\mathbb{N}$. For any $\{B\}\neq \{A\}$, we prove that $\{B\}\notin \bigcap_{n\in\mathbb{N}}\mathcal{W}_n$. Indeed, if $\in B\setminus A\neq \emptyset$, pick any $y\in B\setminus A$; then there exists $n_0$ such that $y\notin V_{n_0}$, then $\{B\}\notin \mathcal{W}_{n_0}$. If $B\subset A$, then $(X\setminus B)\cap A\neq\emptyset$. Hence there is $U\in \mathcal{U}_{n'}$ such that $U\cap A\subset X\setminus B$ for some $n'$, $\{B\}\notin \mathcal{W}_{n'}$. Hence $\{A\}=\bigcap_{n\in\mathbb{N}}\mathcal{W}_n$.

A space $X$ is called {\it cosmic} if $X$ has a countable network.

\end{proof}
\begin{proposition}
$(CL(X), \tau_{loc fin})$ is cosmic if and only if $X$ is compact
metrizable.
\end{proposition}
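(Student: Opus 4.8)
Suppose $X$ is compact metrizable. Then $X$ is countably compact, so by the Remark following Lemma~\ref{l4} (see \cite[Theorem 1.2]{NS1988}) we have $(CL(X),\tau_{loc fin})=(CL(X),\tau_V)$. Since $X$ is compact metrizable, $(CL(X),\tau_V)$ is compact metrizable by \cite[Theorem 3.3]{H2003}; in particular it is second countable and hence has a countable network, so $(CL(X),\tau_{loc fin})$ is cosmic.

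\textbf{Necessity, setup.} A countable network is inherited by every subspace, so cosmicity is hereditary; since $\mathcal{F}_1(X)$ is a closed copy of $X$ in $(CL(X),\tau_{loc fin})$, the space $X$ is cosmic. Picking a point from each nonempty member of a countable network shows that a space with a countable network is separable, and as this property is hereditary, every cosmic space --- in particular $(CL(X),\tau_{loc fin})$ --- is hereditarily separable and therefore contains no uncountable discrete subspace. The plan is to use this to force $X$ to be compact, and then to derive metrizability from compactness together with the countable network.

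\textbf{Necessity, main step.} Suppose, for contradiction, that $X$ is not compact. A cosmic space is Lindel\"of, and a Lindel\"of space that is countably compact is compact; hence $X$ is not countably compact and there is a countably infinite closed discrete subset $D=\{x_n:n\in\mathbb{N}\}$ of $X$. Every subset of $D$ is closed in $X$, so $CL(D):=\{B\in CL(X):\emptyset\neq B\subseteq D\}$ is a subspace of $(CL(X),\tau_{loc fin})$ of cardinality $\mathfrak{c}$. Being regular and Lindel\"of, $X$ is paracompact, hence collectionwise normal, so the discrete family $\{\{x_n\}:n\in\mathbb{N}\}$ of closed sets admits a discrete open expansion; after shrinking I obtain a locally finite family $\{V_n:n\in\mathbb{N}\}$ of open subsets of $X$ with $x_n\in V_n$ and $V_n\cap D=\{x_n\}$. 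Now fix a nonempty $A\subseteq D$ and set $\mathcal{U}_A=\{V_n:x_n\in A\}$, a locally finite family. For $B\subseteq D$ one checks, using $V_n\cap D=\{x_n\}$, that $B\in\mathcal{U}_A^{-}$ iff $A\subseteq B$, while $B\in(\bigcup\mathcal{U}_A)^{+}$ iff $B\subseteq A$; hence $\mathcal{U}_A^{-}\cap(\bigcup\mathcal{U}_A)^{+}$ is a $\tau_{loc fin}$-open set meeting $CL(D)$ exactly in $\{A\}$. Thus every point of $CL(D)$ is isolated in $CL(D)$, so $CL(D)$ is a discrete subspace of size $\mathfrak{c}$, contradicting hereditary separability. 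Therefore $X$ is compact.

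\textbf{Conclusion and main obstacle.} Finally $X$ is a compact Hausdorff space with a countable network, so its weight equals its network weight and hence is countable; thus $X$ is second countable and, by Urysohn's metrization theorem, metrizable \cite{E1989}. This proves that $X$ is compact metrizable, completing the equivalence. I expect the main obstacle to be the main step: converting the failure of compactness into an \emph{honest} uncountable discrete subspace of $(CL(X),\tau_{loc fin})$. The delicate point is that isolating an \emph{infinite} $A\subseteq D$ forces one to use the subbasic set $\mathcal{U}_A^{-}$ for an infinite family $\mathcal{U}_A$, which must be locally finite; this is precisely why a collectionwise-normal (paracompact) expansion of the closed discrete set $D$ is required, whereas the remaining implications are routine facts about countable networks.
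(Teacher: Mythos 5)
Your argument follows the same route as the paper's proof: sufficiency via compactness and metrizability of the hyperspace, and necessity by noting that $X\cong\mathcal{F}_1(X)$ is cosmic, that cosmic spaces are hereditarily separable, and then refuting failure of (countable) compactness by exhibiting, inside $CL(D)$ for a countably infinite closed discrete $D\subset X$, a discrete subspace of cardinality $\mathfrak{c}$. In fact you are more careful than the paper at the crucial point: the paper merely asserts that each singleton $\{A\}$ is open in the subspace $CL(D)$, while you actually prove it, via a locally finite open expansion $\{V_n\}$ of $D$ with $V_n\cap D=\{x_n\}$ and the open set $\mathcal{U}_A^-\cap(\bigcup\mathcal{U}_A)^+$. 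Your endgame (Lindel\"of plus countably compact gives compact, then network weight equals weight for compact Hausdorff spaces, then Urysohn) is only a cosmetic variant of the paper's (cosmic gives a $G_\delta$-diagonal, then \cite[Theorem 2.14]{G1984}).

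The one genuine gap is exactly where you flagged the delicacy: you invoke regularity of $X$ (``being regular and Lindel\"of, $X$ is paracompact, hence collectionwise normal''), but regularity is never established. The paper's blanket assumption is only $T_2$, and its definition of cosmic asks only for a countable network, which does not imply regularity: refine the usual topology of $\mathbb{R}$ by deleting $K=\{1/n: n\in\mathbb{N}\}$ from the neighborhoods of $0$; the result is Hausdorff with a countable network but not regular, and the closed discrete set $K$ admits no locally finite open expansion there (every neighborhood of $0$ meets all but finitely many members of any expansion), so your construction of $\{V_n\}$ can genuinely fail for a badly chosen $D$. (The same gap is hidden in the paper's one-line assertion, so this is a defect you share with, rather than add to, the published argument.) Two repairs: (i) if cosmic is taken in Michael's sense (regular with a countable network), then the hyperspace, hence its subspace $\mathcal{F}_1(X)\cong X$, is regular and your proof is complete; (ii) better, avoid expansions altogether: every subset of $D$ is closed in $X$, so for an almost disjoint family $\mathcal{A}$ of infinite subsets of $D$ with $|\mathcal{A}|=\mathfrak{c}$, the set $V_A^{+}$ with $V_A=X\setminus(D\setminus A)$ is $\tau_{loc fin}$-open and meets $\mathcal{A}$ exactly in $\{A\}$; thus $\mathcal{A}$ is a discrete subspace of size $\mathfrak{c}$, obtained using only that $D$ is closed discrete, with no locally finite families and no separation axiom beyond $T_1$ needed.
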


\begin{proof}
If $X$ is compact, metrizable, then $(CL(X), \tau_{loc fin})$ is
compact, metrizanble, hence it is cosmic.

If $(CL(X), \tau_{loc fin})$ is cosmic, then $X$ is cosmic. We claim
that $X$ is countably compact, suppose not, $X$ contains a closed
countable discrete subset $D\subset X$. For any $\{A\}\in (CL(D),
\tau_{loc fin})$, $\{A\}$ is open in $(CL(D), \tau_{loc fin})$,
however $|(CL(X), \tau_{loc fin})|=2^\omega$, then it is not a
cosmic space, this is a contradiction. Hence $X$ is countably
compact, therefore $X$ is compact metrizable since a cosmic space
has a $G_\delta$-diagonal and a countably compact space with a
$G_\delta$-diagonal is compact metrizable \cite[Theorem
2.14]{G1984}.
\end{proof}

Finally, we prove the third main theorem of our paper.

\begin{theorem}
The following statements are equivalent for a space $X$.
\begin{enumerate}
\item $(CL(X), \tau_F)$ is metrizable;

\smallskip
\item $(CL(X), \tau_F)$ is quasi-metrizable;

\smallskip
\item $(\mathcal{K}(X), \tau_F)$ is quasi-metrizable;

\smallskip
\item $(\mathcal{F}(X), \tau_F)$ is quasi-metrizable;

\smallskip
\item $(\mathcal{F}_2(X), \tau_F)$ is quasi-metrizable;

\smallskip
\item $X$ is hemicompact and metrizable.
\end{enumerate}

\end{theorem}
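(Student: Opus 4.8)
The plan is to prove the cycle $(1)\Rightarrow(2)\Rightarrow(3)\Rightarrow(4)\Rightarrow(5)\Rightarrow(6)\Rightarrow(1)$. The implications $(2)\Rightarrow(3)\Rightarrow(4)\Rightarrow(5)$ are immediate, since a metric is a quasi-metric and quasi-metrizability is hereditary, so the property passes down the chain of subspaces $\mathcal{F}_2(X)\subset\mathcal{F}(X)\subset\mathcal{K}(X)\subset CL(X)$, each carrying the subspace Fell topology; and $(1)\Rightarrow(2)$ is trivial. For $(6)\Rightarrow(1)$ I would first record the metric reformulation of $(6)$: for a metrizable space, hemicompactness is equivalent to being locally compact and $\sigma$-compact, equivalently locally compact and second countable (a $\sigma$-compact metric space is separable, and a locally compact second countable space is $\sigma$-compact); then invoke the classical fact that $(CL(X),\tau_F)$ is compact metrizable whenever $X$ is locally compact and second countable. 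Thus everything reduces to the one substantial implication $(5)\Rightarrow(6)$.

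For $(5)\Rightarrow(6)$, assume $(\mathcal{F}_2(X),\tau_F)$ is quasi-metrizable, hence first countable; since $\mathcal{F}_1(X)$ is a closed copy of $X$, the space $X$ is itself first countable and quasi-metrizable. The heart of the argument is a local analysis at a singleton $\{x\}$. A cofinal neighbourhood base at $\{x\}$ in the Fell topology may be taken to consist of sets $(K^c)^+\cap U^-$ with $x\in U$ open, $K$ compact and $x\notin K$ (several lower sets $U_i^-$ can be replaced by the single $U^-$ for $U=\bigcap_i U_i$, which only shrinks the neighbourhood). Refining a countable base, I may assume it has the form $\{(K_n^c)^+\cap U_n^-:n\in\mathbb{N}\}$. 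Now fix any compact $K\subset X$ with $x\notin K$; then $(K^c)^+$ is an open neighbourhood of $\{x\}$, so $(K_n^c)^+\cap U_n^-\subset (K^c)^+$ for some $n$. Probing this inclusion with the two-point sets $\{x,z\}$ for $z\in K_n^c\setminus\{x\}$ (each of which lies in $(K_n^c)^+\cap U_n^-$, because $x\in U_n$) forces $z\notin K$; hence $K\subset K_n$. Therefore $\{K_n:n\in\mathbb{N}\}$ is cofinal among the compact subsets of $X$ avoiding $x$, i.e. $X\setminus\{x\}$ is hemicompact.

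With this in hand the global conclusions follow cleanly. A first countable hemicompact Hausdorff space is locally compact: if $\{K_n\}$ is an increasing cofinal family and $x$ had no compact neighbourhood, then picking $y_n\in V_n\setminus K_n$ from a decreasing base $\{V_n\}$ at $x$ gives a convergent sequence $y_n\to x$ whose compact range is contained in no $K_m$ (as $y_m\notin K_m$), contradicting cofinality. Applying this to each $X\setminus\{x\}$ shows every point distinct from $x$ has a compact neighbourhood in $X$; using two distinct base points yields that $X$ is locally compact. Moreover $X\setminus\{x\}=\bigcup_n K_n$, so $X$ is $\sigma$-compact, and a locally compact $\sigma$-compact Hausdorff space is hemicompact. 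It remains to prove $X$ is metrizable. Here every compact subspace $C\subset X$ is compact, Hausdorff and quasi-metrizable; writing $d$ for an inducing quasi-metric and $\mathrm{St}(x,\mathcal U)=\bigcup\{U\in\mathcal U:x\in U\}$, one checks $\bigcap_n\mathrm{St}\bigl(x,\{B(y,2^{-n}):y\in C\}\bigr)=\{x\}$: a witness sequence $y_n$ with $d(y_n,x),d(y_n,z)\to 0$ has a subnet converging to some $w$, whence $d(w,y_{n_\alpha})\to 0$ and the triangle inequality gives $d(w,x)=d(w,z)=0$, so $x=w=z$. Thus $C$ has a $G_\delta$-diagonal and is compact metrizable by \cite[Theorem 2.14]{G1984}. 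Exhausting $X$ by compacta $K_n\subset\mathrm{int}(K_{n+1})$, each second countable, shows $X$ is second countable, hence metrizable by Urysohn's theorem. This gives $(6)$.

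The main obstacle is the local analysis of the second paragraph: the entire force of the Fell topology, as opposed to the Vietoris topology, is concentrated in the upper sets $(K^c)^+$, and the delicate point is to convert first countability at the \emph{singleton} $\{x\}$ into hemicompactness of $X\setminus\{x\}$ by testing the neighbourhood base against two-point sets. The passage from ``every $X\setminus\{x\}$ is locally compact'' to global local compactness, and the compactness-plus-triangle-inequality argument producing a $G_\delta$-diagonal on compacta (which repairs the lack of symmetry of the quasi-metric), are the other two places where a genuine idea is needed, although both become routine once set up.
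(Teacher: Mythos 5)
Your proposal is correct, and the decisive implication $(5)\Rightarrow(6)$ is proved by exactly the paper's argument: take a countable Fell-neighbourhood base $\{(K_n^c)^+\cap U_n^-\}$ at the singleton $\{x\}$ in $\mathcal{F}_2(X)$, and probe the inclusion $(K_n^c)^+\cap U_n^-\subset (K^c)^+$ with two-point sets $\{x,z\}$ to conclude $K\subset K_n$, so that $X\setminus\{x\}$ is hemicompact. The differences are in the supporting steps, and one of them is genuinely a different route. For local compactness, the paper simply cites \cite[3.4.E]{E1989} (first-countable hemicompact $\Rightarrow$ locally compact) and then verifies hemicompactness of $X$ by hand, covering a compact $L$ by $\overline{U}$ together with finitely many $K_n$; you instead prove the Engelking fact directly (the convergent-sequence argument; strictly, the compact set is the range \emph{together with} the limit $x$) and quote ``locally compact $+$ $\sigma$-compact $\Rightarrow$ hemicompact'' --- these are interchangeable. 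The real divergence is metrizability: the paper notes that a quasi-metrizable space is a $\gamma$-space, gets local metrizability from \cite[Corollary 10.8(ii)]{G1984}, and finishes by Lindel\"ofness; you instead show directly that every compact subspace $C$, being quasi-metrizable, admits a $G_\delta$-diagonal sequence via the star-covers by balls (this works: quasi-metric balls, unlike those of a symmetric, are genuinely open, and your subnet/triangle-inequality argument correctly exploits property (i) of a quasi-metric), so $C$ is metrizable by \cite[Theorem 2.14]{G1984}, whence $X$ is second countable by the exhaustion and metrizable by Urysohn. Your route is more self-contained and avoids the $\gamma$-space machinery; the paper's is shorter given that machinery.

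One small correction in $(6)\Rightarrow(1)$: for $X$ locally compact, second countable and non-compact, $(CL(X),\tau_F)$ is metrizable but \emph{not} compact; it is the compact metrizable hyperspace of all closed sets including $\emptyset$ with the non-isolated point $\emptyset$ removed. Since only metrizability is needed, this slip is harmless; the paper cites \cite[Theorem 5.1.5]{B1993} for this implication.
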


\begin{proof}
The implications (1) $\Rightarrow$ (2) $\Rightarrow$ (3)
$\Rightarrow$ (4) $\Rightarrow$ (5) are trivial. By \cite[Theorem
5.1.5]{B1993}, we have (6) $\Rightarrow$ (1). We only need to prove
(5) $\Rightarrow$ (6).

Since $\mathcal{F}_2(X)$ is quasi-metrizable, it follows that $X$ is first-countable.
Fix any $z\in X$. Firstly, we prove that $X$ is locally compact at
$z$. Pick any $x\in X\setminus \{z\}$ and let $\{{U}^-_n\cap (K^c_n)^+ \cap
\mathcal{F}_2(X): n\in \mathbb{N}\}$ be a countable base of $\{x\}$,
where each $U_{n}$ is an open neighborhood of $x$ in $X$ and each
$K_{n}$ is compact in $X$. For any compact subset $K\subset
X\setminus\{x\}$, the set $(K^c)^+\cap\mathcal{F}_2(X)$ is an open
neighborhood of $\{x\}$ in $\mathcal{F}_2(X)$, hence there is
$n\in\mathbb{N}$ such that $U_{n}^-\cap (K^c_n)^+\cap
\mathcal{F}_2(X)\subset (K^c)^+\cap\mathcal{F}_2(X)$. If $K\setminus
K_n\neq\emptyset$, then pick any $y\in K\setminus K_n$. Clearly, $\{x,
y\}\in U_{n}^-\cap (K^c_n)^+\cap \mathcal{F}_2(X)$; however, $\{x,
y\}\notin (K^c)^+$, which is a contradiction. Therefore, $K\subset
K_n$. Then $X\setminus \{x\}$ is hemicompact and $X$ is
$\sigma$-compact. Hence $X\setminus\{x\}$ is locally compact by
\cite[3.4.E]{E1989}, and $X$ is locally compact at $z$. Therefore
$X$ is locally compact. Let $U$ be an open neighborhood of $x$ with
$\overline{U}$ compact; then we prove any compact subset $L$ of $X$, the set $L$
is covered by a finite subfamily of $\{\overline{U}, K_n: n\in
\mathbb{N}\}$. Indeed, If $L\subset X\setminus \{x\}$, then $L$ is
contained in the union of a finite subfamily of $\{K_n:
n\in\mathbb{N}\}$; if $x\in L$, then $L\setminus U\subset X\setminus
\{x\}$ is compact, which is covered by a finite subfamily
$\mathcal{K}'$ of $\{K_n: n\in \mathbb{N}\}$, hence $L\subset
U\cup(\bigcup\mathcal{K}')$. $X$ is hemicompact.

Note that a quasi-metrizable space is a $\gamma$-space, then it
follows from \cite[Corollary 10.8(ii)]{G1984} that $X$ is locally
metrizable, hence it is metrizable since $X$ is Lindel\"{o}f.
\end{proof}

\end{document}